\documentclass{article}
\usepackage[utf8]{inputenc}
\usepackage{dsfont}
\usepackage{amsthm}
\usepackage{amsmath}
\usepackage{caption}
\usepackage{subcaption}
\usepackage{graphicx}
\usepackage{hyperref}
\usepackage[ruled,vlined]{algorithm2e}
\usepackage[longnamesfirst]{natbib}
\usepackage{pgfplots}
\usepackage[title]{appendix}
\usepackage{pgfplotstable} 
\usetikzlibrary{calc}
\pgfplotsset{width=6.5cm,compat=1.7}
\usepackage{rotating}
\usepackage{tabularx}
\usepackage{bm}
\DeclareMathOperator{\stat}{stat}
\usepackage{nicefrac}

\usepackage{xcolor}

\newtheorem{remark}{Remark}
\newtheorem{proposition}{Proposition}
\newtheorem{corollary}{Corollary}
\newtheorem{lemma}{Lemma}
\newtheorem{definition}{Definition}
\definecolor{color1}{HTML}{1f77b4}
\definecolor{color2}{HTML}{ff7f0e}
\definecolor{color3}{HTML}{2ca02c}
\definecolor{color4}{HTML}{d62728}
\definecolor{color5}{HTML}{9467bd}
\tikzset{every node/.prefix style={font=\footnotesize}}
\pgfplotsset{twoonpage/.style={height=200,width=300}}
   \pgfplotsset{proxystyle/.style={
      x tick label style={font=\tiny,rotate=90},
      grid = both,
      xtick = {0,  27,  54,  81, 108, 135, 162, 189, 216, 243},
      xticklabels={18-08 , 19-02, 19-08, 20-02, 20-09, 21-03, 21-09, 22-03,22-09, 23-03},
      xmin = 0,
      xmax = 270
   }} 

      \pgfplotsset{SimStyle/.style={
      x tick label style={font=\tiny,rotate=90},
      grid = both,
      xtick = { 0,  25,  50,  75, 100, 125, 150, 175, 200, 225, 250, 275, 300, 325, 350, 375, 400, 425, 450, 475,500},
      xticklabels={0,  1,  2,  3,  4,  5,  6,  7,  8,  9, 10, 11, 12, 13, 14, 15, 16,
       17, 18, 19,20},
      xmin = 0,
      xmax = 500
   }}

      \pgfplotsset{PriceModelStyle/.style={
      x tick label style={font=\tiny,rotate=90},
      grid = both,
      xtick = { 0,  25,  50,  75, 100, 125, 150, 175, 200, 225, 250, 275, 300, 325, 350, 375, 400, 425, 450, 475,500},
      xticklabels={0,  1,  2,  3,  4,  5,  6,  7,  8,  9, 10, 11, 12, 13, 14, 15, 16,
       17, 18, 19,20},
      xmin = 0,
      xmax = 500
   }}

 \pgfplotsset{autocorrelatiostyle/.style={
      x tick label style={font=\tiny,rotate=90},
      grid = both,
      xtick = { 0, 25,50,75,100,125,150},
      xticklabels={ 0, 25,50,75,100,125,150},
      xmin = 0,
      xmax = 150
   }}

 \pgfplotsset{AutoCorrelationStyle2/.style={
      x tick label style={font=\tiny,rotate=90},
      grid = both,
      xtick = { 0, 50,100,150,200,250,300},
      xticklabels={ 0, 50,100,150,200,250,300},
      xmin = 0,
      xmax = 300
   }}

\title{Stationary solution for a fractional stochastic delay differential equations with multiple delays}
\author{{\'A}lvaro Guinea Juliá and Alet Roux}

\begin{document}

\maketitle

\begin{abstract} This paper studies a linear stochastic delay differential equation driven by fractional Brownian motion and involving a finite number of discrete delays. The model combines two sources of memory: delayed feedback in the drift and temporal dependence induced by the Hurst parameter of the fractional Brownian motion. We first introduce the fundamental solution associated with the corresponding deterministic delay equation and use it to obtain an explicit representation of the solution. The stochastic convolution with respect to fractional Brownian motion is defined pathwise as a Riemann--Stieltjes integral, using the finite-variation properties of the fundamental solution. This representation allows us to compute the mean and auto-covariance function of the process. Under a delay-independent stability condition on the drift coefficients, we prove that the fundamental solution decays exponentially and derive the long-time behaviour of the stochastic delay equation. In particular, the solution converges in finite-dimensional distributions to a stationary Gaussian process, whose limiting mean and covariance are given explicitly. When the Hurst parameter satisfies \(H>1/2\), we show that the limiting covariance preserves the long-memory behaviour of the driving fractional Brownian motion and decays asymptotically as \(h^{2H-2}\). Finally, by extending the noise to a double-sided fractional Brownian motion, we construct a stationary solution of the equation and prove its uniqueness. The results provide a tractable framework for modelling stationary systems with both delay effects and fractional memory, with potential applications to stochastic volatility, energy modelling, and other time series exhibiting persistence. \end{abstract}

\newpage
\section{Introduction}

Continuous-time models driven by fractional Brownian motion provide a natural framework for stochastic systems whose fluctuations are not well described by the Markovian and semimartingale structure of classical Brownian motion. Since the seminal construction of fractional Brownian motion by \citet{Mandelbrot1968}, the Hurst parameter has become a parsimonious way to encode both regularity and temporal dependence. In financial applications this feature is particularly relevant for volatility modelling. The fractional stochastic volatility model of \citet{comte1998long} introduced long-range dependence through a fractional Ornstein--Uhlenbeck factor, while the empirical evidence of \citet{gatheral2022volatility} suggested that log-volatility is much rougher than standard Brownian motion, with a Hurst parameter well below one half. This observation led to a large rough-volatility literature, including option-pricing models such as \citet{Bayer2016} and empirical models that decouple short-run roughness from long-run persistence, as in \citet{bennedsen2022decoupling}. Fractional Ornstein--Uhlenbeck processes remain a central tractable building block in this literature; see, for instance, \citet{Cheridito2003} and the recent applications to variance and volatility derivatives in \citet{KIM2024102155}.

A second source of non-Markovian behaviour arises from delay. In many dynamical systems the present drift depends not only on the current state but also on past values of the state variable. This mechanism is especially relevant when feedback, information transmission, memory effects, or delayed adjustment play a role. Linear stochastic delay differential equations therefore extend the classical Langevin equation by allowing the drift to contain discrete lags. Foundational contributions in this direction include \citet{Kuchler1992}, who studied a delayed Langevin equation, and \citet{GUSHCHIN1999}, who analysed stationary solutions of delay equations driven by L\'evy noise. Statistical aspects of affine stochastic delay models were considered by \citet{Kulcher2013}. More recently, stochastic delay structures have appeared in financial and energy applications, including stochastic-delay volatility models \citep{GuineaJulia2024}, short-rate models with several delays \citep{Guinea2024closed}, and continuous-time stationary models for wind power production \citep{rohde2019continuous}.

The combination of fractional noise and delay is mathematically appealing because it brings together two distinct forms of memory. Fractional Brownian motion introduces dependence through the driving signal, whereas delay introduces dependence through the drift. Existing work on fractional stochastic differential equations and fractional stochastic delay equations includes \citet{rascanu2002differential}, \citet{Ferrante2006}, \citet{Rao2008}, \citet{Davis2020}, and \citet{ghani2025singular}. However, explicit distributional results for affine delay equations driven directly by fractional Brownian motion are still comparatively scarce, especially in the presence of several discrete delays and for a general Hurst parameter $H\in(0,1)$. The objective of this paper is to fill this gap for a linear multi-delay model that can be used as a Gaussian volatility factor or as a tractable component in more elaborate stochastic volatility specifications.

We study the fractional Ornstein Uhlenbeck with several discrete delay parameters. First in Section \ref{sect:model}, we obtain an analytical representation of the model based on the variation of constants formula. The main contribution of Section \ref{sect:limiting} is an explicit description of the  mean and autocovariance for arbitrary $H\in(0,1)$ and we also show that the existence of a limiting distribution, under some conditions on the parameters. We prove that the long-memory behavior of the fractional Brownian motion with $H>1/2$ is inherited by the limiting stationary process, in Section \ref{sect:LongMemory}. Thus the delays modify the multiplicative constant but do not change the power-law exponent generated by the fractional Brownian motion. This result clarifies the role of delay in a fractional volatility factor: delays affect the strength and shape of dependence at finite horizons, while the asymptotic memory exponent remains determined by $H$. Finally in Section \ref{sect:Stationary}, we construct a unique stationary solution of the model on the real line by using a two-sided fractional Brownian motion.

\section{Model}\label{sect:model}
Let $\left(\Omega,\mathcal{F},\mathds{P},\left(\mathcal{F}_t\right)_{t\geq 0}\right)$ be a filtered probability space. In this space, we study the solution of the following stochastic delay differential equation
\begin{equation}
dY_t =  \left(a + b Y_t + \sum_{j=1}^N c_j Y_{t-\tau_j}\right)dt + \sigma dW_t^H \label{eq:Y}
\end{equation} for all $t\ge0$, where $N\in\mathds{}{N}$, $a,b,c_1,c_2,\ldots,c_N\in\mathds{R}$, $\sigma>0$, $W^H=(W^H_t)_{t\geq0}$ is a fractional Brownian motion with Hurst exponent $H\in(0,1)$, and $\tau_N >\tau_{N-1}> \ldots> \tau_1>0$. The initial condition is
\begin{equation}
 Y_t = \phi(t) \text{ for }  t\in [-\tau_N,0], \label{eq:phi}
\end{equation}
where $\phi:[-\tau_N,0]\rightarrow\mathds{R}$ is a deterministic continuous function. 

An explicit solution for equation \eqref{eq:Y} will be obtained in Proposition \ref{prop:sol:SDE} below. To this end, we introduce the function $R:\mathds{R}\rightarrow\mathds{R}$, defined as 
\begin{equation} \label{eq:R}
 R(t) = 
 \begin{cases}
 \displaystyle \sum_{n=0}^{\lfloor\frac{t}{\tau_1}\rfloor}  \sum_{|\alpha|= n} \frac{c^\alpha}{\alpha !} \left(t- \langle \alpha,\tau \rangle\right)^n e^{b\left(t-\langle \alpha,\tau \rangle\right)} \mathcal{H}\left(t- \langle \alpha,\tau \rangle\right)   &\text{ when  }  t\ge0,\\
 0  &\text{ when  }t\in(-\infty,0),
 \end{cases} 
 \end{equation} where $\mathcal{H}=\mathds{1}_{[0,\infty)}$ is the Heaviside function and the inner summation in \eqref{eq:R} is expressed using multi-index notation with $\alpha=(\alpha_1,\alpha_2,\ldots,\alpha_N)$,
 \[ |\alpha| = \sum_{i=1}^N \alpha_i,\text{ }c^\alpha= \prod_{i=1}^N c_i^{\alpha_i},\text{ } \alpha!= \prod_{i=1}^N \alpha_i!, \text{ }  \langle \alpha,\tau \rangle = \sum_{i=1}^N \alpha_i \tau_i,  \]
 and $\alpha_i\geq 0$ for every $i=1,\ldots,N$. Notice that $R(0)=1$ and that $R$ does not depend on the parameter $a$. 
 
 Direct differentiation shows that $R$ satisfies the delay initial value problem
\begin{equation}
\left.\begin{aligned}
    R'(t) & = bR(t) + \sum_{j=1}^N c_j R(t-\tau_j)  && \text{ for almost everywhere } t>0, \\
    R(t) & = \mathds{1}_{\{0\}}(t) &&\text{when } t\in (-\infty,0].
    \end{aligned}\right\} \label{eq:Rdeter}
\end{equation}
More generally, $R$ is related to the solution of the deterministic delay initial value problem
 \begin{align} 
    x'(t) & = bx(t) + \sum_{j=1}^N c_j x(t-\tau_j) \text{ when } t>0 \label{eq:deter}\\
    x(t) & = g(t) \text{ when } t\in [-\tau_N,0], \nonumber
\end{align} where $g:[-\tau_N,0] \to \mathds{R}$ is a continuous deterministic function. It is given by the following result. 
 
\begin{proposition}\citep[Proposition 2.1]{Guinea2024closed}\label{prop:DeterDelay}
    The unique solution of equation \eqref{eq:deter} is
    \[ x(t) = x(0) R(t) + \sum_{j=1}^N c_j \int_{0}^{t\wedge \tau_j} R(t-s) g(s-\tau_j) ds \text{ for } t\geq 0,\] where $R$ is defined as in equation  \eqref{eq:R}.
\end{proposition}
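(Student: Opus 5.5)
The plan is to prove existence and uniqueness separately. Uniqueness comes from the method of steps, and existence from checking directly that the proposed formula satisfies \eqref{eq:deter}, using the delay initial value problem \eqref{eq:Rdeter} for $R$.

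\textbf{Uniqueness.} On $[0,\tau_1]$ every delayed argument $t-\tau_j$ lies in $[-\tau_N,0]$, so \eqref{eq:deter} becomes a linear ODE $x'=bx+f(t)$ with a known continuous forcing term $f(t)=\sum_j c_j g(t-\tau_j)$. Its solution is unique given $x(0)=g(0)$. Inductively, on $[k\tau_1,(k+1)\tau_1]$ all delayed values are already determined, so the solution extends uniquely. Hence at most one continuous solution exists on $[0,\infty)$.

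\textbf{Existence.} Define $x$ by the formula, with $x(0)=g(0)$. Consistency at $t=0$ holds because $R(0)=1$ and the integrals vanish.

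I rewrite the $j$-th integral with $u=s-\tau_j$:
\[
\int_0^{t\wedge\tau_j}R(t-s)g(s-\tau_j)\,ds=\int_{-\tau_j}^{(t-\tau_j)\wedge 0}R(t-\tau_j-u)\,g(u)\,du .
\]
Since $R(v)=0$ for $v<0$, I may replace the upper limit by $0$ and write this as $I_j(t)=\int_{-\tau_j}^{0}R(t-\tau_j-u)g(u)\,du$.

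Differentiating in $t$ gives
\[
I_j'(t)=R(0^+)\,g(t-\tau_j)\mathds{1}_{\{t<\tau_j\}}+\int_{-\tau_j}^{0}R'(t-\tau_j-u)g(u)\,du .
\]
The first term is the jump of $R$ at $0$ (from $0$ to $1$), which contributes through the upper limit when $t<\tau_j$. Now substitute $R'(v)=bR(v)+\sum_k c_kR(v-\tau_k)$ for a.e. $v>0$, with $R'=0$ for $v<0$. The last integral becomes $b\,I_j(t)+\sum_k c_k\,I_j(t-\tau_k)$, where $I_j$ is extended by the same formula to negative arguments.

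I then sum. I compute $bx(t)+\sum_k c_k x(t-\tau_k)$, extending $x$ to $[-\tau_N,0]$ by $g$. Since $R(t-\tau_k)=0$ for $t<\tau_k$, the goal is the identity
\[
x(t-\tau_k)=g(0)R(t-\tau_k)+\sum_j c_j I_j(t-\tau_k)+g(t-\tau_k)\mathds{1}_{\{t<\tau_k\}} .
\]
Together with $R'(t)=bR(t)+\sum_k c_kR(t-\tau_k)$, this yields $x'(t)=bx(t)+\sum_k c_kx(t-\tau_k)$.

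\textbf{Main obstacle.} I expect the main obstacle to be this last bookkeeping step: showing that for negative argument $s=t-\tau_k\in[-\tau_N,0)$ the formula $g(0)R(s)+\sum_j c_jI_j(s)$ vanishes, so that $x(s)=g(s)$ enters only through the boundary terms. It vanishes because $R(s)=0$, and in $I_j(s)$ the argument $s-\tau_j-u$ is negative except possibly on a null set. The delicate part is handling the boundary term at the jump of $R$ at $0$ and the set where $R'$ fails to exist. I would treat both by writing $R=\mathcal{H}\cdot\tilde R$, where $\tilde R$ is the continuous part, and differentiating in the distributional or absolutely continuous sense. I would then verify that $x$ is continuous and absolutely continuous, so that the equation holds for all $t>0$.
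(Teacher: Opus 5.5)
Your argument is correct. The paper does not prove this proposition itself (it quotes it from \citet{Guinea2024closed}), but your route is the one the paper follows in the proof of Proposition~\ref{prop:sol:SDE} with $a=0$ and the noise term removed: the method of steps, the Leibniz rule combined with the delay equation \eqref{eq:Rdeter} for $R$, and the identification of $x(t-\tau_k)$ including the boundary term $g(t-\tau_k)\mathds{1}_{\{t<\tau_k\}}$. Your use of absolute continuity of $x$ and continuity of the right-hand side to pass from an a.e.\ identity to every $t>0$ is a point the paper leaves implicit.
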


The function $R$ is called the fundamental solution of \eqref{eq:deter}; see Section 1.5 of \citet{hale2006functional}. 

The final step before obtaining the solution to equation \eqref{eq:Y} is to study the stochastic integral of the function $R$ with respect to the fractional Brownian motion $W^H$.

\begin{remark}\label{remark:Integrlal1}
The function $R$ is locally of bounded variation since its derivatives is piecewise continuous. This means that we can define the stochastic integral of the function $R$ with respect to the fractional Brownian motion $W^H$ using integration by parts. By the finite variation property of $R$, the integral \[ \int_0^t W_s^H dR(t-s)\] exists in a Riemann-Stieltjes sense for every $t>0$. By applying Theorem 2.21 in \citet{wheeden1977measure}, we have that the stochastic integral
\[ \int_0^t  R(t-s)dW_s^H \text{ for } t\geq 0 \] is defined as a Riemann-Stieltjes integral, and it satisfies 
\begin{align}
    \int_0^t  R(t-s)dW_s^H  & =  W_t^H R(0)  -\int_0^t W_s^H dR(t-s)\nonumber\\
    &=  \int_0^t R'(t-u) W_u^H du + W_t^H \text{ for } t\geq  0. \label{eq:StochInt1} 
\end{align}
\end{remark} 

Finally, we have the following. 

\begin{proposition} \label{prop:sol:SDE}
The solution $(Y_t)_{t\ge0}$ of the stochastic differential equation \eqref{eq:Y} with initial condition \eqref{eq:phi} is given by
\begin{multline} 
 Y_t  = R(t)Y_0 + a\int_0^t R(t-s)ds + \sum_{j=1}^N c_j\int_{-\tau_j}^0 R(t-s-\tau_j)\phi(s)ds \\
 + \sigma \int_0^t  R(t-s)dW_s^H \label{eq:2.5}
\end{multline}
for all $t\ge0$, where $R$ is defined in \eqref{eq:R}. Furthermore, a change of variables allows us to express \eqref{eq:2.5} as
\begin{multline*} 
 Y_t  = R(t)Y_0 + a\int_0^t R(t-s)ds + \sum_{j=1}^N c_j\int_{0}^{t\wedge \tau_j} R(t-s)\phi(s-\tau_j)ds \\
 + \sigma \int_0^t  R(t-s)dW_s^H.
\end{multline*}
\end{proposition}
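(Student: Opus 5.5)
The plan is to verify directly that the process defined by \eqref{eq:2.5} satisfies the integrated form of \eqref{eq:Y}, namely
\[ Y_t = Y_0 + \int_0^t \Big(a + bY_s + \sum_{j=1}^N c_j Y_{s-\tau_j}\Big)ds + \sigma W^H_t, \qquad t\ge 0, \]
with $Y_s=\phi(s)$ on $[-\tau_N,0]$. Uniqueness then follows pathwise: the difference of two solutions solves the deterministic equation \eqref{eq:deter} with $g\equiv 0$, and Proposition \ref{prop:DeterDelay} (or a step-by-step argument on intervals of length $\tau_1$) forces it to vanish. Because the noise enters additively, everything reduces to pathwise deterministic calculus once the stochastic convolution is rewritten via \eqref{eq:StochInt1}.

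I would split $Y_t = D_t + \sigma Z_t$. Here $D_t$ collects the first three terms, and $Z_t=\int_0^t R(t-s)dW^H_s = W^H_t + \int_0^t R'(t-u)W^H_u\,du$. Extend $Z_t=0$ for $t<0$.

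For $D$ I would use Proposition \ref{prop:DeterDelay} with $g=\phi$. The terms $R(t)Y_0 + \sum_j c_j\int_{-\tau_j}^0 R(t-s-\tau_j)\phi(s)ds$ solve the homogeneous delay equation with initial datum $\phi$. The term $a\int_0^t R(t-s)ds$ is handled by differentiating and using \eqref{eq:Rdeter}: its derivative is $a R(t) \cdot$ (from the boundary, since $R(0)=1$) plus $a\int_0^t R'(t-s)ds$. Writing $R'$ via \eqref{eq:Rdeter} and using $R=0$ on negative arguments gives exactly $a + b(\cdot) + \sum_j c_j(\cdot)(t-\tau_j)$. This contribution vanishes for $t\le 0$, consistent with the initial condition.

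For the stochastic part, I need $Z_t = \int_0^t \big(bZ_s + \sum_j c_j Z_{s-\tau_j}\big)ds + W^H_t$. Substituting the representation of $Z$, this becomes an identity between deterministic continuous functionals of the path $w=W^H(\omega)$. I would prove it by Fubini on $\int_0^t\int_0^s R'(s-u)w_u\,du\,ds = \int_0^t (R(t-u)-1)w_u\,du$, using that $R$ is absolutely continuous on $[0,\infty)$ with $R(0)=1$. Then I would apply \eqref{eq:Rdeter} in the form $R(t-u)-1=\int_u^t\big(bR(s-u)+\sum_j c_jR(s-u-\tau_j)\big)ds$ and compare both sides term by term. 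Alternatively, and more cleanly, I would note that for smooth $w$ the function $Z=\int_0^t R(t-s)w'(s)ds$ satisfies the equation by the Duhamel argument. Since both sides are continuous in $w$ under the sup norm on $[0,t]$, which is clear from the integration-by-parts form, the identity extends to continuous paths by density.

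The main obstacle is this last step: handling the stochastic convolution rigorously when $W^H$ is only Hölder continuous, and in particular justifying the interchange of integration and the behaviour of $R'$ at the jump points $\langle\alpha,\tau\rangle$, where $R'$ is only piecewise continuous. The density argument sidesteps this, since $R$ is Lipschitz on compacts, so the map $w\mapsto Z$ is continuous. The final change-of-variables form follows from substituting $s\mapsto s-\tau_j$ and using $R(t-s)=0$ for $s>t$, which produces the upper limit $t\wedge\tau_j$.
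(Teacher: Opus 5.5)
Your proposal is correct and is essentially the paper's argument: a pathwise verification in which the stochastic convolution is rewritten as $W^H_t+\int_0^t R'(t-u)W^H_u\,du$ via \eqref{eq:StochInt1}, the delay equation \eqref{eq:Rdeter} for $R$ closes the identity, and uniqueness comes from the method of steps. The paper differentiates $f=Y-\sigma W^H$ in one pass with the Leibniz rule, whereas you use superposition through Proposition~\ref{prop:DeterDelay} and an integrated Fubini check (amounting to $\int_0^t Z_s\,ds=\int_0^t R(t-u)W^H_u\,du$), which is only a tidier bookkeeping of the same computation; note one small slip, namely that the boundary term in the derivative of $a\int_0^t R(t-s)\,ds$ is $aR(0)=a$, not $aR(t)$.
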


\begin{proof}
First, notice that the process in \eqref{eq:2.5} is well defined by Remark \ref{remark:Integrlal1}. Existence and uniqueness come from applying the method of steps; see  \citet[p.~158]{mao1997} for the method being applied to stochastic differential equations with standard Brownian motion. 

We will show that the process defined in \eqref{eq:2.5} satisfies equation \eqref{eq:Y}. It follows from \eqref{eq:StochInt1} that
\begin{equation}\label{eq:decomp}
    Y_t =  f(t) + \sigma W_t^H \text{ for all } t\geq 0,
\end{equation} where $f$ is defined as
\begin{align*}
    f(t) &= R(t)Y_0 + a\int_0^t R(t-s)ds + \sum_{j=1}^N c_j\int_{-\tau_j}^0 R(t-s-\tau_j)\phi(s)ds \nonumber \\
    & \qquad + \sigma \int_0^t R'(t-u) W_u^H du \nonumber \\
    &= R(t)Y_0 + a\int_0^t R(t-s)ds + \sum_{j=1}^N c_j\int_{-\tau_j}^0 R(t-s-\tau_j)\phi(s)ds \nonumber \\
    & \qquad + b\sigma \int_0^t R(t-u) W_u^H  du + \sigma \sum_{j=1}^N c_j\int_0^t R(t-u-\tau_j) W_u^H du \label{eq:f-def}
\end{align*}
as $R$ satisfies the initial value problem \eqref{eq:Rdeter}. 

Now define the  process $\widehat{W}$ as
\[\widehat{W}_t =W_t^H \mathds{1}_{\{t\geq 0 \}} \text{ for } t \in \mathds{R}.\]
Since $R(s) =0$ for $s<0$, a change of variables gives
\[\int_0^t R(t-u-\tau_j) W_u^H du = \int_{\tau_j}^{t+\tau_j} R(t-s) W_{s-\tau_j}^H ds = \int_{0}^t R(t-s) \widehat{W}^H_{s-\tau_j} ds \] for all $j=1,2,\ldots,N$.
After substitution into \eqref{eq:Rdeter} and applying the Leibniz rule, we arrive at
\begin{align}
     f'(t) & = R'(t)Y_0  + a\left[R(0) + \int_0^t R'(t-s) ds\right]\nonumber\\
    & \qquad + \sum_{j=1}^N c_j \left[\phi(t-\tau_j) \mathds{1}_{\{ t<\tau_j\}} +  \int_{0}^{t\wedge \tau_j} R'(t-s)\phi(s-\tau_j)ds\right]  \nonumber \\
    & \qquad  +  b\sigma \left[R(0)W_t^H  + \int_0^t R'(t-s) W_s^H  ds\right]  \nonumber \\
    & \qquad  + \sigma\sum_{j=1}^N c_j \left[R(0) \widehat{W}_{t-\tau_j}^H   + \int_0^t  R'(t-s) \widehat{W}_{s-\tau_j}^H ds\right] \label{eq:f1}
\end{align} for almost all $t>0$.
Equation \eqref{eq:StochInt1}, together with $R(0)=1$, gives
\begin{equation}
R(0)W_t^H + \int_0^t R'(t-s) W_s^H ds  = \int_0^t R(t-s)dW_s^H \label{eq:b1}.  
\end{equation}
Moreover, when $t>\tau_j$, and by a change of variables, we have for all $j=1,\ldots,N$ that 
\begin{align}
 R(0)  \widehat{W}_{t-\tau_j}^H
 + \int_0^t  R'(t-s)  \widehat{W}_{s-\tau_j}^H ds
 & = R(0)  {W}_{t-\tau_j}^H
 + \int_{\tau_j}^{t}  R'(t-s)  W_{s-\tau_j}^H ds\nonumber\\
  & = R(0)  {W}_{t-\tau_j}^H
 + \int_{0}^{t-\tau_j}  R'(t-\tau_j-s)  W_{s}^H ds\nonumber\\
 & =  \int_0^{(t-\tau_j)^+}  R(t-\tau_j-s) dW_{s}^H.\label{eq:c1}
\end{align} Equations \eqref{eq:b1}--\eqref{eq:c1} and the initial value problem \eqref{eq:Rdeter} allow us to rewrite \eqref{eq:f1} as
\begin{align}
     f'(t) & = \left[bR(t) + \sum_{j=1}^N c_j R(t-\tau_j)\right]Y_0 \nonumber \\
     &\qquad + a\left[1 + b \int_0^t R(t-s) ds + \sum_{j=1}^N c_j \int_0^t R(t-s-\tau_j)ds \right]\nonumber\\
    & \qquad + \sum_{j=1}^N c_j \phi(t-\tau_j) \mathds{1}_{\{ t<\tau_j\}}  + b\sum_{j=1}^N c_j \int_{0}^{t\wedge \tau_j} R(t-s)\phi(s-\tau_j)ds   \nonumber \\
   & \qquad + \sum_{j=1}^N \sum_{k=1}^N c_j c_k \int_{0}^{t\wedge \tau_j} R(t-s-\tau_k) \phi(s-\tau_j)ds  \nonumber \\
    & \qquad  +  b\sigma \int_0^t R(t-s)dW_s^H  + \sigma\sum_{j=1}^N c_j \int_0^{(t-\tau_j)^+}  R(t-\tau_j-s) dW_{s}^H. \label{eq:fPrima1}
\end{align}
Grouping the terms with coefficients $b$ and $c_j$, and swapping the indices $j$ and $k$ in the double summation, we obtain
\begin{align}
    f'(t) & =  a + bY_t \nonumber\\
    &+ \sum_{j=1}^N  c_j \left[\phi(t-\tau_j) \mathds{1}_{\{ t<\tau_j\}} + Y_0R(t-\tau_j) +  a \int_0^{(t-\tau_j)^+} R(t-\tau_j-s) ds \right.\nonumber\\
    & + \left. \sum_{k=1}^N c_k \int_{0}^{t\wedge \tau_k} R(t-s-\tau_j)\phi(s-\tau_k)ds +\sigma \int_0^{(t-\tau_j)^+} R(t-\tau_j-s) dW_{s}^H \right].
    \label{eq:fPrima2} 
\end{align} 

Notice that $Y_{t-\tau_j} = \phi(t-\tau_j)$ for $t\leq \tau_j$ and for $t> \tau_j$ we have
\begin{align*}
 Y_{t-\tau_j} & = Y_0R(t-\tau_j) +  \int_0^{t-\tau_j} a R(t-\tau_j-s) ds \nonumber\\
    & \qquad +  \sum_{k=1}^N c_k \int_{0}^{(t-\tau_j)\wedge \tau_k} R(t-s-\tau_j)\phi(s-\tau_k)ds \\
    & \qquad +\int_0^{t-\tau_j} \sigma R(t-\tau_j-s) dW_{s}^H,
\end{align*} where we used the fact that
\begin{align*}
     \int_{0}^{t\wedge \tau_k} R(t-s-\tau_j)\phi(s-\tau_k)ds &= \int_{-\tau_k}^{0} R(t-\tau_j-\tau_k-s)\phi(s)ds \\
     & = \int_0^{(t-\tau_j)\wedge \tau_k} R(t-\tau_j - u)\phi(u-\tau_k)du.
\end{align*}
This allows us to write equation \eqref{eq:fPrima2} as
\begin{equation}
    f'(t)  =  a + bY_t + \sum_{j=1}^N c_j Y_{t-\tau_j} \text{ for almost everywhere } t>0. \label{eq:fPrima3} 
\end{equation}
Equations \eqref{eq:fPrima3} and \eqref{eq:decomp} show that the process $Y$ in \eqref{eq:2.5} satisfies the stochastic delay differential equation \eqref{eq:Y}.
\end{proof} 

\section{Limiting distribution}\label{sect:limiting}

In this section, we are interested in studying the distributional properties of the process $Y_t$ given in Proposition \ref{prop:sol:SDE}. The properties of the fractional stochastic integral \citep[Chapter 6]{biagini2008stochastic} mean that $Y$ is a Gaussian process. Therefore, to obtain a complete understanding of $Y$, it is enough to study its first two moments. To this end, we have the following result.

\begin{proposition}\label{prop:Moments12}
    The expected value and auto-covariance of $Y_t$ for $t,h\geq 0$ is given by
    \begin{equation}
    E[Y_t] = R(t)Y_0 + a\int_0^t R(t-s)ds + \sum_{j=1}^N c_j\int_{-\tau_j}^0 R(t-s-\tau_j)\phi(s)ds\label{eq:Expected}
    \end{equation}
    and
    \begin{align}
& \operatorname{Cov}\left[ Y_t, Y_{t+h}\right] \nonumber\\
&= \sigma^2 \left( -\frac{1}{2}h^{2H} + \frac{1}{2}t^{2H} R(t+h) + \frac{1}{2}(t+h)^{2H} R(t) \vphantom{\int_0^{t+h}}\right.\nonumber\\
     &\qquad-\frac{1}{2} \int_{-h}^t |s|^{2H} R'(s+h) ds -\frac{1}{2} \int_{0}^t |s+h|^{2H} R'(s) ds\nonumber\\
     &\qquad + \frac{1}{2} R(t+h) \int_0^t s^{2H} R'(t-s) ds  + \frac{1}{2} R(t)\int_0^{t+h} s^{2H} R'(t+h-s) ds \nonumber\\
     &\qquad\left. - \frac{1}{2} \int_0^t \int_0^{t+h} |s-u|^{2H} R'(t-s) R'(t+h-u) du\, ds\right).\label{eq:AutoCov}
    \end{align}
\end{proposition}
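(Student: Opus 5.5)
We work from the representation in Proposition \ref{prop:sol:SDE} combined with the pathwise identity \eqref{eq:StochInt1}:
\[
Y_t = m(t) + \sigma\Big(W_t^H + \int_0^t R'(t-u)W_u^H\,du\Big),
\]
where $m(t)$ collects the deterministic terms $R(t)Y_0 + a\int_0^t R(t-s)ds + \sum_j c_j\int_{-\tau_j}^0 R(t-s-\tau_j)\phi(s)ds$.

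\textbf{Mean.} Since $E[W^H_u]=0$, the mean formula \eqref{eq:Expected} follows once we justify exchanging expectation and the Lebesgue integral. This is Fubini: $R'$ is piecewise continuous, hence bounded on $[0,t]$, and $E|W^H_u|\le C u^H$ is integrable on $[0,t]$.

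\textbf{Covariance setup.} Write $Z_t = W_t^H + \int_0^t R'(t-u)W_u^H du$, so that $\operatorname{Cov}[Y_t,Y_{t+h}] = \sigma^2 E[Z_tZ_{t+h}]$. Expanding the product gives four terms:
\[
E[W_tW_{t+h}],\qquad \int_0^{t+h} R'(t+h-u)E[W_tW_u]\,du,\qquad \int_0^t R'(t-s)E[W_sW_{t+h}]\,ds,
\]
and the double integral $\int_0^t\int_0^{t+h}R'(t-s)R'(t+h-u)E[W_sW_u]\,du\,ds$. Each exchange of expectation and integral is again justified by Fubini, using $E|W_sW_u|\le (su)^H$ and boundedness of $R'$ on compacts. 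We then substitute $E[W_sW_u]=\tfrac12(s^{2H}+u^{2H}-|s-u|^{2H})$ throughout.

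\textbf{Simplifying the terms.} The first term is $\tfrac12(t^{2H}+(t+h)^{2H}-h^{2H})$. In the cross terms, the pieces in which the covariance kernel does not depend on the integration variable integrate $R'$ directly, using $\int_0^{t}R'(t-s)ds = R(t)-R(0) = R(t)-1$. For example, the $t^{2H}$ part of the second term gives $\tfrac12 t^{2H}(R(t+h)-1)$. The remaining pieces give $\tfrac12\int_0^{t+h}u^{2H}R'(t+h-u)du$ and $-\tfrac12\int_0^{t+h}|t-u|^{2H}R'(t+h-u)du$.

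For the double integral, we split the kernel into its three parts. The $s^{2H}$ part factorises as $\tfrac12(R(t+h)-1)\int_0^t s^{2H}R'(t-s)ds$, the $u^{2H}$ part factorises symmetrically, and the $|s-u|^{2H}$ part is kept as it stands.

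\textbf{Matching the stated form.} Collecting terms, the $\pm\tfrac12 t^{2H}$ and $\pm\tfrac12 (t+h)^{2H}$ contributions combine into $\tfrac12 t^{2H}R(t+h)+\tfrac12(t+h)^{2H}R(t)$. The single integrals $\tfrac12\int_0^{t+h}u^{2H}R'(t+h-u)du$ and $\tfrac12\int_0^t s^{2H}R'(t-s)ds$ coming from the cross terms cancel against the "$-1$" parts of the factorised double-integral terms. What remains are $R(t+h)\tfrac12\int_0^t s^{2H}R'(t-s)ds$ and $R(t)\tfrac12\int_0^{t+h}\cdots$, exactly as in \eqref{eq:AutoCov}.

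The $|t-u|^{2H}$ integral becomes $-\tfrac12\int_{-h}^{t}|s|^{2H}R'(s+h)ds$ after the substitution $s=t-u$. The analogous term from the third piece becomes $-\tfrac12\int_0^t|s+h|^{2H}R'(s)ds$ after the substitution $s\mapsto t-s$.

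\textbf{Main obstacle.} The main difficulty is this bookkeeping of the cancellations and changes of variables, together with confirming that $R'$ is only defined almost everywhere (with jumps at the points $\langle\alpha,\tau\rangle$). Neither the identity $\int_0^t R'=R(t)-1$ nor the Fubini arguments are affected by this, because $R$ is absolutely continuous on $[0,\infty)$: it is continuous with a piecewise continuous derivative.
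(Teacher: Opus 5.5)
Your proposal is correct and follows the paper's proof essentially step by step. You use \eqref{eq:StochInt1} to write the centred process as $\sigma\bigl(W_t^H+\int_0^t R'(t-u)W_u^H\,du\bigr)$, expand into the same four expectations, insert the fractional Brownian motion covariance via Fubini, and arrive at the same cancellations and changes of variables. The justifications you add are details the paper leaves implicit: Fubini via $E|W_s^HW_u^H|\le (su)^H$, and absolute continuity of $R$ on $[0,\infty)$ to get $\int_0^t R'(t-s)\,ds=R(t)-1$.
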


When the Hurst exponent satisfies $H>1/2$, a more compact representation of the auto-covariance of  $Y$ exists than the one given in Proposition \ref{prop:Moments12}; see \eqref{eq-LongAutoCov} in Section \ref{sect:LongMemory} below.

\begin{proof}[Proof of Proposition \ref{prop:Moments12}.] 
    Equation \eqref{eq:Expected} follows directly from properties of the fractional stochastic integral \citep[Chapter 6]{biagini2008stochastic}. Equation \eqref{eq:AutoCov} requires more work. Applying \eqref{eq:Expected} and \eqref{eq:StochInt1}, and using the auto-covariance function of fractional Brownian motion \citep[Chapter 6]{biagini2008stochastic} gives
    \begin{align}
        &\operatorname{Cov}\left[ Y_t, Y_{t+h}\right] \nonumber\\
        & = \sigma^2 E\left[ \left(W_t^H + \int_0^t W_s^H R'(t-s) ds \right) \left(W_{t+h}^H + \int_0^{t+h} W_s^H R'(t+h-s) ds \right)  \right] \nonumber \\
        & = \sigma^2\left( \frac{1}{2}\left( (t+h)^{2H} + t^{2H} -h^{2H}  \right) + E\left[ W_t^H \int_0^{t+h} W_s^H R'(t+h-s) ds \right]   \right.\nonumber\\
        & \qquad + E\left[ W_{t+h}^H \int_0^{t} W_s^H R'(t-s) ds \right]\nonumber\\
        &\left. \qquad +  E\left[  \int_0^{t} W_s^H R'(t-s) ds \int_0^{t+h} W_u^H R'(t+h-u)du \right] \right). \label{eq:autoCov2}
    \end{align}
    
    We now consider each of the expectation terms in turn. Combining Fubini's Theorem with the properties of fractional Brownian motion, the first expectation becomes
\begin{align*}
    & E\left[ W_t^H \int_0^{t+h} W_s^H R'(t+h-s) ds \right] \nonumber \\
    & = \int_0^{t+h} \frac{1}{2}\left( t^{2H} + s^{2H} - |t-s|^{2H} \right)  R'(t+h-s) ds\nonumber\\
    & = \frac{1}{2} t^{2H} (R(t+h)-1) + \frac{1}{2} \int_0^{t+h} s^{2H} R'(t+h-s) ds - \frac{1}{2} \int_{-h}^t |y|^{2H} R'(y+h) dy.
\end{align*} Similarly, for the second expected value, we obtain
\begin{multline*}
    E\left[ W_{t+h}^H \int_0^{t} W_s^H R'(t-s) ds \right] \\
    = \frac{1}{2} (t+h)^{2H} (R(t)-1) + \frac{1}{2} \int_0^t s^{2H} R'(t-s)ds - \frac{1}{2} \int_0^t (y+h)^{2H} R'(y)dy.
\end{multline*}
Finally, the last expected value in \eqref{eq:autoCov2} can be expressed as
\begin{multline*}
    E\left[  \left(\int_0^{t} W_s^H R'(t-s) ds\right) \left(\int_0^{t+h} W_u^H R'(t+h-u)du\right) \right]\nonumber\\
    \begin{aligned}
    &= \int_0^t \int_0^{t+h} \frac{1}{2}\left(s^{2H} +u^{2H} - |s-u|^{2H}\right) R'(t-s) R'(t+h-u) du ds \nonumber\\
    & = \frac{1}{2} \left(R(t+h) -1\right)  \int_0^t s^{2H} R'(t-s) ds \nonumber\\
    & \qquad +  \frac{1}{2} \left(R(t) -1\right)  \int_0^{t+h} u^{2H} R'(t+h-u) du \nonumber\\
    & \qquad - \frac{1}{2} \int_0^t \int_0^{t+h} |s-u|^{2H} R'(t-s) R'(t+h-u) du ds.
    \end{aligned}
\end{multline*} Combining this with \eqref{eq:autoCov2} gives \eqref{eq:AutoCov}.
\end{proof}

We now turn our attention to the behavior of the process \(Y\) as \(t \to \infty\). The asymptotic behavior of \(Y\) depends on the stability properties of the delay differential equation \eqref{eq:deter} through the function $R$. The relevant notion of stability is known as asymptotic stability, which, intuitively, means that the trivial solution $x=0$ of \eqref{eq:deter} attracts other solutions as time goes to infinity. This means that perturbations in the initial condition $f$ (and influence of the initial history $\phi$ in our model) become insignificant as time increases. The following characterisation of asymptotic stability is due to \cite[Corollary~3.4]{HALE1985533}. The definition of asymptotic stability is standard \cite[cf.~e.g.][p.~38]{smith2011introduction}.

\begin{proposition}
 The following statements are equivalent for the delay differential equation \eqref{eq:deter}:
 
 \begin{enumerate}
   \item We have
    \begin{align}\label{eq:cond1}
    b &< 0, & b + \sum_{j=1}^N c_j &\neq 0, & |b| &\geq  \sum_{j=1}^N |c_j|.
\end{align}
    
    \item Equation \eqref{eq:deter} is asymptotically stable for every choice of the delay parameters $\tau_1,\ldots,\tau_N>0$, in other words, for every $\varepsilon > 0$, there exists $\delta > 0$
    such that
    \[
        \sup_{s \in [-\tau_{N},0]} |g(s)| < \delta
        \quad \Longrightarrow \quad
        |x(t)| < \varepsilon
        \qquad \text{for all } t \geq 0.
    \]
    and there exists $\delta_{0} > 0$ such that
    \[
       \sup_{s \in [-\tau_{N},0]} |g(s)|< \delta_{0}
        \quad \Longrightarrow \quad
        \lim_{t \to \infty} x(t) = 0.
    \]
 \end{enumerate}
\end{proposition}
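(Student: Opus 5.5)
This proposition is cited from \cite[Corollary~3.4]{HALE1985533}. Still, I would prove it through the characteristic equation
\[
\Delta(\lambda) = \lambda - b - \sum_{j=1}^N c_j e^{-\lambda\tau_j} = 0 .
\]
The equation \eqref{eq:deter} is linear and autonomous. It is therefore asymptotically stable exactly when every root of $\Delta$ has negative real part, which by the standard theory in \citet{hale2006functional} is equivalent to exponential decay of the fundamental solution $R$. Via Proposition~\ref{prop:DeterDelay}, stability is thus equivalent to the root condition. Both implications reduce to locating the roots of $\Delta$ uniformly in $(\tau_1,\ldots,\tau_N)$.

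\emph{Sufficiency} ($1\Rightarrow 2$). Take a root $\lambda$ with $\operatorname{Re}\lambda \ge 0$. Then
\[
|\lambda - b| = \Bigl|\sum_j c_j e^{-\lambda\tau_j}\Bigr| \le \sum_j |c_j| \le |b| .
\]
Since $b<0$, we also have $|\lambda-b|\ge \operatorname{Re}\lambda - b \ge |b|$, so equality holds throughout. This forces $\operatorname{Re}\lambda=0$ and $\operatorname{Im}\lambda = 0$, i.e.\ $\lambda=0$. But then $\Delta(0) = -b-\sum_j c_j \ne 0$ by the second condition in \eqref{eq:cond1}, a contradiction. Hence every root satisfies $\operatorname{Re}\lambda<0$ for every choice of delays. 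Roots of a retarded equation satisfy $\operatorname{Re}\lambda \to -\infty$ as $|\lambda|\to\infty$, so only finitely many lie in any right strip. Consequently $\sup\operatorname{Re}\lambda<0$, which gives exponential decay of $R$. Uniform asymptotic stability then follows from the representation in Proposition~\ref{prop:DeterDelay}.

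\emph{Necessity} ($2\Rightarrow 1$). Suppose one of the three conditions in \eqref{eq:cond1} fails.
\begin{itemize}
\item If $b+\sum_j c_j = 0$, then $\lambda=0$ is a root, so constants are solutions and the equation is not asymptotically stable.
\item If $b \ge 0$, let $\tau_j\to\infty$ with the delays rationally independent. A continuity (Rouché) argument then produces a root with $\operatorname{Re}\lambda\ge 0$ for large delays. A simpler route uses the real root $\lambda$ of $\lambda - b - \sum_j c_j e^{-\lambda\tau_j}=0$: when $b>0$ this equation has a positive solution for large $\tau$ by the intermediate value theorem, since the exponential terms become small.
\item If $b<0$ but $\sum_j|c_j|>|b|$, I choose the delays so that $c_j e^{-i\omega\tau_j} = -|c_j|$ at a common frequency $\omega$. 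This is possible by picking each $\tau_j$ with $\omega\tau_j$ in the appropriate residue class modulo $2\pi$ while keeping the delays ordered. Then $\Delta(i\omega) = i\omega - b + \sum_j|c_j|$. I tune $\omega$ and use a homotopy argument: as the delays grow, roots cross the imaginary axis, since the imaginary-axis equation $|i\omega - b| = |\sum_j c_j e^{-i\omega\tau_j}|$ is solvable when $\sum_j|c_j|>|b|$. This gives a root with $\operatorname{Re}\lambda>0$, hence an exponentially growing solution.
\end{itemize}

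The main obstacle is the last case. One must arrange the delays, subject to the ordering $\tau_1<\cdots<\tau_N$, so that the phases align and a root actually crosses into the right half-plane. Showing that a root exists with $\operatorname{Re}\lambda>0$, not merely on the axis, is the delicate part. Here I would rely on the Hale–Infante–Tsen density argument, which says the closure of the root set over all delays is governed by $|\lambda-b|\le\sum_j|c_j|$. Alternatively, one can simply invoke \cite[Corollary~3.4]{HALE1985533} directly for necessity.
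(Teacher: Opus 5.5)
The paper gives no proof of this proposition; it simply quotes \cite[Corollary~3.4]{HALE1985533}. Your sufficiency direction is a correct self-contained argument. For a root with $\operatorname{Re}\lambda\ge0$, the chain $|b|\le\operatorname{Re}\lambda-b\le|\lambda-b|\le\sum_j|c_j|\le|b|$ forces $\lambda=0$, which $b+\sum_jc_j\neq0$ rules out. Finiteness of roots in right half-planes then gives $\sup\operatorname{Re}\lambda<0$, hence exponential decay of $R$.

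The necessity direction, however, has a genuine gap in its main case $\sum_j|c_j|>|b|$. The phase alignment you propose, $c_je^{-i\omega\tau_j}=-|c_j|$, gives $\Delta(i\omega)=i\omega-b+\sum_j|c_j|$. Its real part is $|b|+\sum_j|c_j|>0$, so the alignment points in exactly the wrong direction and can never produce a characteristic root. Your fallback does not repair this. Solvability of the modulus equation $|i\omega-b|=|\sum_jc_je^{-i\omega\tau_j}|$ is only a necessary condition for an imaginary-axis root, since the arguments must match as well. The claim that roots cross the axis as the delays grow is asserted, not proved.

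To close the gap, align every term with $i\omega_0-b$ instead, where $\omega_0=\sqrt{(\sum_j|c_j|)^2-b^2}>0$. Set $e^{i\phi}=(i\omega_0-b)/|i\omega_0-b|$, and choose each $\tau_j$ with $c_j\neq0$ so that $c_je^{-i\omega_0\tau_j}=|c_j|e^{i\phi}$. Adding multiples of $2\pi/\omega_0$ keeps the delays positive and strictly increasing. Then $\sum_jc_je^{-i\omega_0\tau_j}=(\sum_j|c_j|)e^{i\phi}=i\omega_0-b$, so $\Delta(\pm i\omega_0)=0$ because the coefficients are real. Consequently $x(t)=\delta\cos(\omega_0t)$ solves \eqref{eq:deter} with a history of sup norm $\delta$, yet does not tend to $0$. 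This already contradicts the attractivity requirement in the definition, so the step you single out as delicate (producing a root with $\operatorname{Re}\lambda>0$) is not needed at all.

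This construction never uses $b<0$, so it also covers $b=0$ with some $c_j\neq0$. Your case $b\ge0$ leaves that situation to an unspecified Rouch\'e argument, and your intermediate-value route genuinely fails there: for instance, $\dot x(t)=-x(t-\tau)$ has no nonnegative real characteristic root. Two further cases complete the necessity argument. If $b+\sum_jc_j=0$, constants are solutions. If $b>0$, $\sum_j|c_j|\le b$ and $b+\sum_jc_j\neq0$, then $\Delta(0)=-b-\sum_jc_j<0$, which yields a positive real root for every choice of delays.
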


If equation \eqref{eq:deter} is asymptotically stable for every choice of the delay parameters $\tau_1,\ldots,\tau_N>0$, then there exist constants $M,\lambda >0$  such that
\begin{equation}\label{eq:Rbound}
    |R(t) | \leq M e^{-\lambda t}  \text{ for  } t\geq 0
\end{equation}
\citet[Corollary~6.1]{hale2006functional}. We then have the following.

\begin{lemma}
For every choice of the delay parameters $\tau_1,\ldots,\tau_N>0$, if condition \eqref{eq:cond1} holds, then
\begin{align}
    \left|R'(t)\right| & \leq  C_R e^{-\lambda t} \text{ for almost everywhere } t\geq 0, \label{eq:Rprimebound}
\end{align} where $C_R = M  \left( |b| + \sum_{j=1}^N |c_j| e^{\lambda \tau_j}\right)$, and $\lambda>0$ and $M>0$ are the same constants as in \eqref{eq:Rbound}.

\end{lemma}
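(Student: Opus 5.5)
The plan is to read the bound directly off the delay equation \eqref{eq:Rdeter} that $R$ satisfies, combined with the exponential bound \eqref{eq:Rbound}. First, under condition \eqref{eq:cond1}, the preceding Proposition (due to Hale) gives that equation \eqref{eq:deter} is asymptotically stable for every choice of delays. Hence \eqref{eq:Rbound} applies: there are constants $M,\lambda>0$ with $|R(t)|\le Me^{-\lambda t}$ for $t\ge 0$. Since $R(t)=0$ for $t<0$, this bound trivially extends to all real $t$ in the form $|R(t)|\le M e^{-\lambda t}\mathds{1}_{\{t\ge0\}}\le M e^{-\lambda t}$ whenever $t\ge 0$ is replaced by an arbitrary argument that is nonnegative. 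For negative arguments $R$ simply vanishes.

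Next, for almost every $t>0$, \eqref{eq:Rdeter} gives $R'(t)=bR(t)+\sum_{j=1}^N c_jR(t-\tau_j)$. The triangle inequality then yields
\[
|R'(t)|\le |b|\,|R(t)|+\sum_{j=1}^N |c_j|\,|R(t-\tau_j)|.
\]
The first term is bounded by $|b|Me^{-\lambda t}$. For each delay term there are two cases.
\begin{itemize}
\item If $t\ge\tau_j$, then $|R(t-\tau_j)|\le Me^{-\lambda(t-\tau_j)}=Me^{\lambda\tau_j}e^{-\lambda t}$.
\item If $t<\tau_j$, then $R(t-\tau_j)=0$. This is at most $Me^{\lambda\tau_j}e^{-\lambda t}$, because the right-hand side is positive.
\end{itemize}
Summing the terms gives $|R'(t)|\le M\bigl(|b|+\sum_j|c_j|e^{\lambda\tau_j}\bigr)e^{-\lambda t}=C_Re^{-\lambda t}$ for almost every $t>0$. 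The single point $t=0$ is irrelevant for an almost-everywhere statement.

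The only subtle point is the "almost everywhere" qualifier. $R$ is piecewise smooth, with possible kinks at the finitely many points $\langle\alpha,\tau\rangle$ lying in any bounded interval. Hence $R'$ exists and satisfies \eqref{eq:Rdeter} off a countable, locally finite set, which has measure zero. I expect no real obstacle here. The substantive content, the exponential decay of $R$ itself, is imported from \citet[Corollary~6.1]{hale2006functional} through \eqref{eq:Rbound}.
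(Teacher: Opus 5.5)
Your proof is correct and matches the paper's argument: apply the triangle inequality to the delay equation \eqref{eq:Rdeter}, then bound each term using \eqref{eq:Rbound} to obtain $C_R e^{-\lambda t}$. Your explicit treatment of the case $t<\tau_j$, where $R(t-\tau_j)=0$, fills in a detail the paper leaves implicit.
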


\begin{proof}  From the initial value problem \eqref{eq:Rdeter}, and  inequality \eqref{eq:Rbound}, we arrive at
    \begin{align*}
    \left|R'(t)\right| & \leq |b| |R(t)| + \sum_{j=1}^N |c_j| |R(t-\tau_j)| \\
    & \leq  |b| M  e^{-\lambda t} + \sum_{j=1}^N |c_j| M e^{-\lambda (t-\tau_j)} 
     = C_R e^{-\lambda t} 
\end{align*} for almost everywhere $t\ge 0$.
\end{proof}

We are finally ready to establish the asymptotic distribution of the process~$Y$. 

\begin{proposition}\label{prop:stationary_process}Assume that condition \eqref{eq:cond1} holds. Then the process $Y$ converges in
finite-dimensional distributions to a strictly stationary Gaussian process  $Y^\infty=(Y^\infty_s)_{s\ge0}$ with mean 
     \begin{equation}\label{eq:limit_mean}
       \mu =  a\int_0^\infty R(s) ds,
     \end{equation}and auto-covariance function 
     \begin{multline}
       \gamma(h) =  \sigma^2 \left(-\frac{1}{2}h^{2H}    -\frac{1}{2} \int_{-h}^\infty |y|^{2H} R'(y+h) dy -\frac{1}{2} \int_{0}^\infty |y+h|^{2H} R'(y) dy \right. \\
         \left. - \frac{1}{2} \int_0^\infty \int_{-h}^{\infty} |s-u|^{2H} R'(s) R'(u+h) du ds \right),\label{eq:limit_covar}
     \end{multline} for $h\geq 0$, and for every choice of delay parameters $\tau_1,\ldots,\tau_N >0$.
\end{proposition}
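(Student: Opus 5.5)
Since $Y$ is Gaussian, convergence in finite-dimensional distributions reduces to convergence of the mean vector and the covariance matrix. Fix $0\le s_1<\dots<s_k$ and shift everything by $t$. The vector $(Y_{t+s_1},\dots,Y_{t+s_k})$ is Gaussian, so its characteristic function is $\exp(i\langle u,m_t\rangle-\tfrac12 u^\top\Sigma_t u)$. It therefore suffices to show two things as $t\to\infty$: first, $E[Y_{t+s}]\to\mu$ for each $s$; second, $\operatorname{Cov}[Y_{t+s},Y_{t+s+h}]\to\gamma(h)$ for each $s,h\ge0$. Lévy's continuity theorem then gives convergence to the centred-plus-$\mu$ Gaussian law with covariance matrix $(\gamma(|s_i-s_j|))_{i,j}$. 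Once we know this matrix is a genuine covariance (it is a pointwise limit of covariance matrices, hence positive semidefinite), Kolmogorov's extension produces a Gaussian process $Y^\infty$ with these finite-dimensional laws. Its mean is constant and its covariance depends only on lags, so it is strictly stationary because it is Gaussian.

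\textbf{Mean.} Start from \eqref{eq:Expected} with $t$ replaced by $t+s$. By \eqref{eq:Rbound}, $R(t+s)Y_0\to0$. The $\phi$-term is bounded by $\sum_j|c_j|\,\tau_j\,\|\phi\|_\infty\, M e^{-\lambda(t+s-2\tau_N)}$, which tends to $0$. Finally,
\[
a\int_0^{t+s}R(t+s-u)\,du=a\int_0^{t+s}R(v)\,dv\to a\int_0^\infty R(v)\,dv=\mu,
\]
and the limit exists because $R$ is integrable by \eqref{eq:Rbound}.

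\textbf{Covariance.} Apply \eqref{eq:AutoCov} with $t$ replaced by $t+s$, and pass to the limit term by term using \eqref{eq:Rbound} and \eqref{eq:Rprimebound}. The boundary terms $\tfrac12 t^{2H}R(t+h)$ and $\tfrac12(t+h)^{2H}R(t)$ vanish, since polynomial growth times $e^{-\lambda t}$ tends to $0$. The terms $R(t+h)\int_0^t s^{2H}R'(t-s)\,ds$ and its companion are bounded by $Me^{-\lambda t}(t+h)^{2H}\,C_R/\lambda$, so they also vanish.

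The two single integrals need dominated convergence. The integrands $|y|^{2H}R'(y+h)\mathds 1_{[-h,t]}(y)$ and $|y+h|^{2H}R'(y)\mathds 1_{[0,t]}(y)$ are dominated by $C_R|y+h|^{2H}e^{-\lambda y}$ on $[-h,\infty)$, which is integrable. Hence they converge to the corresponding integrals over $[-h,\infty)$ and $[0,\infty)$.

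For the double integral, first substitute $s\mapsto t-s$ and $u\mapsto t+h-u$. The integrand becomes $|s-u|^{2H}R'(s)R'(u+h)$, with $s\in[0,t]$ and $u\in[-h,t]$; here I use $R'=0$ on $(-\infty,0)$, so the domain can be written as $u\ge -h$. The difference $s-u$ is preserved up to the sign convention in \eqref{eq:limit_covar}. The dominating function $C_R^2(s+|u|+h)^{2H}e^{-\lambda s}e^{-\lambda(u+h)}$ is integrable on $[0,\infty)\times[-h,\infty)$, so dominated convergence again yields the stated limit.

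All of these limits are independent of $s$. This gives stationarity of the limit and the formula \eqref{eq:limit_covar} for $\gamma(h)$, for arbitrary delays.

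\textbf{Main obstacle.} I expect the delicate step to be bookkeeping rather than analysis. The substitutions must reproduce exactly the integration domains $[-h,\infty)$ and $[0,\infty)$ in \eqref{eq:limit_covar}. One must also check that the constant terms cancel correctly: the $-1$ pieces in the proof of Proposition~\ref{prop:Moments12} already cancelled the $\tfrac12 t^{2H}$ and $\tfrac12(t+h)^{2H}$ terms, which would otherwise diverge. Beyond this, the only analytic input is exponential decay dominating power growth $|y|^{2H}$, and this is uniform in $H\in(0,1)$.
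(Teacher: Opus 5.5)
Your proposal is correct and follows essentially the paper's route: term-by-term passage to the limit in \eqref{eq:Expected}--\eqref{eq:AutoCov} using \eqref{eq:Rbound}--\eqref{eq:Rprimebound}, dominated convergence after the same substitution in the double integral, and Gaussianity to upgrade convergence of means and covariances to finite-dimensional convergence. Your L\'evy-continuity and Kolmogorov-extension remarks only make explicit what the paper leaves implicit, and an integrable dominating function stands in for the paper's explicit incomplete-Gamma bound.

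One small slip: $C_R|y+h|^{2H}e^{-\lambda y}$ does not dominate $|y|^{2H}|R'(y+h)|$ near $y=-h$, where the integrand is about $|b|h^{2H}$. Use $C_R|y|^{2H}e^{-\lambda(y+h)}$ instead, which is integrable on $[-h,\infty)$.
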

\begin{proof} To prove this result, we show that all terms in \eqref{eq:Expected}--\eqref{eq:AutoCov} either converge or vanish as $t\to \infty$. 

Equations \eqref{eq:Rbound} and \eqref{eq:Rprimebound} imply the following asymptotic relations:
\begin{align}
        \lim_{t\to \infty} |R(t)| &=0, \label{eq:Limit1}\\
        \lim_{t\to \infty}| R'(t)| & =0, \label{eq:Limit2}\\
        \lim_{t\to \infty} \left|t^{2H} R(t)\right| & = 0, \label{eq:Limit3}\\
       \lim_{t\to \infty} \left|R(t)\int_0^t s^{2H} R'(t-s) ds\right| & = 0.\label{eq:Limit4}
       \end{align}

The first and third terms in \eqref{eq:Expected} vanish as $t\to\infty$ due to \eqref{eq:Limit1} and the exponential bound \eqref{eq:Rbound}. Furthermore,
\[
\int_0^t R(t-s)\,ds = \int_0^t R(u)\,du \to \int_0^\infty R(u)\,du \text{ as } t\to \infty.
\]
Hence \eqref{eq:limit_mean} follows.

We now turn to \eqref{eq:AutoCov}. For the first integral, combining \eqref{eq:Rbound} with the definition of the Gamma function gives
\begin{align*}
    \int_{-h}^\infty |y|^{2H} \left| R'(y+h)\right| dy 
    &\leq C_{R} e^{-\lambda h }  \int_{-h}^\infty |y|^{2H} e^{-\lambda y } dy\nonumber\\
     & =  C_{R} e^{-\lambda h } \left(  \int_{-h}^0 (-y)^{2H} e^{-\lambda y } dy + \int_{0}^\infty y^{2H} e^{-\lambda y } dy \right) \nonumber\\
     & = C_{R} e^{-\lambda h } \left(  \int_{-h}^0 (-y)^{2H} e^{-\lambda y } dy + \frac{\Gamma(2H +1)}{\lambda^{2H+1}}\right) <\infty.
\end{align*} 
Dominated convergence then gives
\begin{align}
    \lim_{t \to \infty} \int_{-h}^t |y|^{2H} R'(y+h) dy   = \int_{-h}^\infty |y|^{2H} R'(y+h) dy\label{eq:Convergence_Integral_1}
\end{align}
 Similarly, we obtain
\begin{align*}
    \int_{0}^\infty |y+h|^{2H} \left|R'(y) \right| dy 
      \leq C_R e^{\lambda h}\left(- \int_{0}^{h} v^{2H} e^{-\lambda v} dv +  \frac{\Gamma(2H +1)}{\lambda^{2H+1}}  \right) < \infty,
\end{align*} which leads to
\begin{equation}
    \lim_{t \to \infty}  \int_{0}^t |y+h|^{2H} R'(y) dy =   \int_{0}^\infty |y+h|^{2H} R'(y)  dy.  \label{eq:Convergence_Integral_2}
\end{equation}
For the double integral \eqref{eq:AutoCov}, applying \eqref{eq:Rbound} and making the successive changes of variables $\sigma = t-s$ and $v= t-u$ and $r=v+\sigma$, $p=v-\sigma$, we obtain
\begin{multline*}
    \lim_{t \to \infty} \left|  \int_0^t \int_0^{t+h} |s-u|^{2H} R'(t-s) R'(t+h-u) du ds \right|\nonumber\\
    \begin{aligned}
    & \leq  C_R^2 e^{-\lambda h} \lim_{t \to \infty}   \int_0^t \int_0^{t+h} |s-u|^{2H} e^{-\lambda(t-s)}  e^{-\lambda(t-u)}du ds  \nonumber \\
    & = C_R^2 e^{-\lambda h} \lim_{t \to \infty}   \int_0^t \int_{-h}^{t} |\sigma-v|^{2H} e^{-\lambda(\sigma+v)}  d\sigma dv   \nonumber\\
    & = C_R^2 e^{-\lambda h}  \int_0^{\infty} \int_{-h}^{\infty} |\sigma-v|^{2H} e^{-\lambda(\sigma+v)}  d\sigma dv \\
    & = \frac{1}{2}C_R^2 e^{-\lambda h}   \int_{-\infty}^{\infty} |p|^{2H}  \int_{\max\{p,-2h-p \}}^{\infty} e^{-\lambda r}  dr dp\nonumber\\
     & = \frac{1}{2\lambda}C_R^2 e^{-\lambda h}   \int_{-\infty}^{\infty} |p|^{2H}  e^{-\lambda \max\{p,-2h-p \}} dp\nonumber\\
      & = \frac{1}{2\lambda}C_R^2 e^{-\lambda h}\left(   \int_{-\infty}^{-h} |p|^{2H}  e^{\lambda (2h+p)} dp + \int_{-h}^{\infty} |p|^{2H}  e^{-\lambda p} dp\right) \nonumber\\
      & = \frac{1}{2\lambda}C_R^2 e^{-\lambda h}\left(  e^{2\lambda h} \int_{h}^{\infty} y^{2H}  e^{-\lambda y} dy + \int_{-h}^{\infty} |p|^{2H}  e^{-\lambda p} dp\right) \\
      & = \frac{1}{2\lambda}C_R^2 e^{-\lambda h}\left( 2\int_0^h y^{2H}\cosh\left(\lambda y\right) dy + \left(1+e^{2\lambda h} \right) \int_{h}^{\infty} y^{2H}  e^{-\lambda y} dy \right).
      \end{aligned}
\end{multline*}
It then follows from the definition of the upper incomplete Gamma function $\Gamma(\cdot,\cdot)$ that
\begin{multline}
    \lim_{t \to \infty} \left|  \int_0^t \int_0^{t+h} |s-u|^{2H} R'(t-s) R'(t+h-u) du ds \right|\nonumber\\
    \leq   \frac{1}{2\lambda}C_R^2 e^{-\lambda h}\left( \int_0^h 2y^{2H}\cosh\left(\lambda y\right) dy + \left(1+e^{2\lambda h} \right) \frac{\Gamma(2H+1,\lambda h)}{\lambda^{2H +1}} \right)< \infty, \label{eq:Integral_3}
\end{multline} where $\Gamma(\cdot,\cdot)$ is the upper incomplete Gamma function. Finally, a change of variables and dominated convergence gives that
\begin{multline*}
 \lim_{t \to \infty} \int_0^t \int_0^{t+h} |s-u|^{2H} R'(t-s) R'(t+h-u) du ds \\
 \begin{aligned}
  &= \lim_{t \to \infty} \int_0^t \int_{-h}^{t} |\sigma - v|^{2H} R'(\sigma) R'(v+h) dv d\sigma \\
  &= \int_0^\infty \int_{-h}^{\infty} |\sigma - v|^{2H} R'(\sigma) R'(v+h) dv d\sigma.
 \end{aligned}
\end{multline*}
Combining this with \eqref{eq:Limit1}--\eqref{eq:Limit4}, \eqref{eq:Convergence_Integral_1} and \eqref{eq:Convergence_Integral_2} shows that every term in \eqref{eq:AutoCov} either converges or vanishes as $t\to \infty$. This proves the result in \eqref{eq:limit_covar}.

Finally, since \(Y\) is a Gaussian process, the vector
$
(Y_{t+s_1}, \ldots, Y_{t+s_m})
$
is Gaussian for every \(m\in\mathds{N}\) and all
\(0 \leq s_1 \leq \cdots \leq s_m\). We have proved that,
\begin{align*}
\lim_{t\rightarrow\infty} E[Y_{t+s_i}] &= \mu,
&
\lim_{t\rightarrow\infty} \operatorname{Cov}(Y_{t+s_i}, Y_{t+s_j}) &= \gamma(s_j - s_i). 
\end{align*}
Therefore, for every fixed $m$ and $0\le s_1\le\cdots\le s_m$, the vector
$
(Y_{t+s_1},\dots,Y_{t+s_m})
$
converges in distribution to a Gaussian vector with mean vector
$(\mu,\dots,\mu)$ and covariance matrix $\big(\gamma(s_j-s_i)\big)_{i,j=1}^m$.
Since the limiting covariance depends only on the lag, the limiting Gaussian
process is stationary. Hence $(Y_{t+s})_{s\ge0}$ converges in finite-dimensional
distributions to a strictly stationary Gaussian process.
\end{proof}

We can derive an explicit formula for the expected value $\mu$.

\begin{corollary}\label{cor:stat_mean}
    Assume that condition \eqref{eq:cond1} holds. Then
\begin{equation}\label{eq:limit_mean2}
       \lim_{t\to \infty} E[Y_t] =  a\int_0^\infty R(s) ds = -\frac{a}{b+\sum_{j=1}^N c_j} 
    \end{equation} for every choice of delay parameters $\tau_1,\ldots,\tau_N >0$.
    
    Furthermore, in the case $a=1$ we obtain
\begin{equation}\label{eq:InfIntegral}
\int_0^\infty R(s)\,ds = -\frac{1}{b+\sum_{j=1}^N c_j}.
\end{equation}
\end{corollary}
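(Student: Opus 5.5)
The first equality in \eqref{eq:limit_mean2} is exactly \eqref{eq:limit_mean} from Proposition \ref{prop:stationary_process}, since $\lim_{t\to\infty}E[Y_t]$ was shown there to equal $a\int_0^\infty R(s)\,ds$. It therefore suffices to prove \eqref{eq:InfIntegral}; multiplying by $a$ then gives the second equality in \eqref{eq:limit_mean2}. The plan is to integrate the delay equation \eqref{eq:Rdeter} satisfied by $R$ over $[0,T]$ and let $T\to\infty$.

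Fix $T>0$. Since $R$ is absolutely continuous on $[0,\infty)$ (it is continuous with piecewise continuous derivative) and $R(0)=1$, integrating \eqref{eq:Rdeter} gives
\[
R(T)-1=b\int_0^T R(s)\,ds+\sum_{j=1}^N c_j\int_0^T R(s-\tau_j)\,ds .
\]
Because $R(u)=0$ for $u<0$, each delayed integral equals $\int_0^{T-\tau_j}R(u)\,du$ once $T>\tau_j$. We now let $T\to\infty$:
\begin{itemize}
\item $R(T)\to0$ by \eqref{eq:Limit1};
\item every integral converges to $I:=\int_0^\infty R(s)\,ds$, which is finite by the exponential bound \eqref{eq:Rbound}.
\end{itemize}
This yields $-1=\bigl(b+\sum_{j=1}^N c_j\bigr)I$.

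Condition \eqref{eq:cond1} guarantees $b+\sum_{j=1}^N c_j\neq0$, so we may divide and obtain \eqref{eq:InfIntegral}. This holds for arbitrary delays, since \eqref{eq:Rbound} holds for every choice of $\tau_1,\dots,\tau_N$ under \eqref{eq:cond1}.

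The only delicate point is justifying the integration of $R'$. The fundamental theorem of calculus applies because $R$ is continuous on $[0,\infty)$ and piecewise $C^1$, with breakpoints at the finitely many points $\langle\alpha,\tau\rangle$ lying in $[0,T]$. Continuity across the breakpoints holds because each summand in \eqref{eq:R} with $n\geq1$ vanishes at its own breakpoint. The remaining step is simply the use of $b+\sum_j c_j\neq0$.
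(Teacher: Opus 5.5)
Your proof is correct, but it takes a different route from the paper's.

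\textbf{The paper's route.} The paper never integrates the equation for $R$. Instead it shifts the process: it sets $X_t = Y_t + a/(b+\sum_{j=1}^N c_j)$, which is where $b+\sum_j c_j\neq 0$ is used. It observes that $X$ solves \eqref{eq:Y} with $a=0$, and applies Proposition \ref{prop:stationary_process} to $X$ to get $\lim_{t\to\infty}E[X_t]=0$. Hence $\lim_{t\to\infty}E[Y_t]=-a/(b+\sum_j c_j)$. Comparing with \eqref{eq:limit_mean}, and using that $R$ does not depend on $a$, the choice $a=1$ gives \eqref{eq:InfIntegral}. The deterministic identity for $\int_0^\infty R$ thus comes out as a by-product of the stochastic limit theorem. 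This is quick given the machinery already in place, and it makes transparent that the limit mean is the level at which the drift vanishes on constant paths. The cost is that it relies on the solution representation of Proposition \ref{prop:sol:SDE} and on the limit result, now applied to the shifted equation.

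\textbf{Your route.} You work purely at the level of the fundamental solution. Integrating \eqref{eq:Rdeter} over $[0,T]$ and letting $T\to\infty$ amounts to evaluating the Laplace transform \eqref{eq:Laplace_end} at $s=0$. This is legitimate because $L_R(s)$ converges for $\operatorname{Re} s>-\lambda$ by \eqref{eq:Rbound}. Your argument is self-contained: it needs only the exponential bound and absolute continuity of $R$ on compacts, and you justify the latter correctly, since each summand with $n\ge 1$ vanishes at its breakpoint. It also yields a small extra: the identity $-1=(b+\sum_j c_j)\int_0^\infty R(s)\,ds$ shows that $b+\sum_j c_j\neq 0$ is forced by the decay of $R$, not merely assumed.
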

\begin{proof}
    The transformation
    \[ X_t = Y_t +\frac{a}{b+\sum_{j=1}^N c_j} \text{ for } t\in[-\tau_N,\infty),\] leads to the stochastic delay differential equation with initial condition
    \begin{align}
        dX_t & = \left(bX_t + \sum_{j=1}^N c_j X_{t-\tau_j} \right) dt + \sigma d W_t^H \text{ for } t>0\label{eq:X1}\\
        X_t & = \phi(t) + \frac{a}{b+\sum_{j=1}^N c_j} \text{ for } t\in [-\tau_N,0] \label{eq:X2}.
    \end{align} Observe that the process $X$ satisfies \eqref{eq:Y} with $a=0$. Applying Proposition \ref{prop:stationary_process} to the initial value problem \eqref{eq:X1}--\eqref{eq:X2}, we get  \[ \lim_{t\to \infty} E[X_t] =0,\] and the result in \eqref{eq:limit_mean2} follows.
\end{proof}
To conclude this section, we give the formula for the variance, which follows from Propositions \ref{prop:Moments12}--\ref{prop:stationary_process}.
\begin{remark}From equations \eqref{eq:AutoCov} and \eqref{eq:limit_covar}, we can compute the variance of $Y$ as
\begin{multline}
\operatorname{Var}[Y_t] =  \sigma^2 \left( t^{2H}R(t) - \int_0^t y^{2H} R'(y) dy +  R(t) \int_0^t s^{2H} R'(t-s) ds\right.\\
        \left. -  \int_0^t\int_0^t \frac{1}{2} |s-u|^{2H} R'(t-s) R'(t-u) ds du\right), \label{eq:var1}
\end{multline} and its asymptotic variance as
\begin{equation}
  \gamma(0) =  \sigma^2 \left( - \int_0^\infty y^{2H} R'(y) dy-  \int_0^{\infty}\int_0^{\infty} \frac{1}{2} |s-u|^{2H} R'(s) R'(u) ds du \right).\label{eq:limit_var}
\end{equation} 
    
\end{remark}

\section{Long memory}\label{sect:LongMemory}

In the long memory case, when $H>1/2$, the auto-covariance of process $Y$ can be written more compactly than \eqref{eq:AutoCov}, due to the isometry property of the fractional stochastic integral \citep[Chapter 6]{biagini2008stochastic}. In particular, when \(H>1/2\) we have
\begin{align} \label{eq-LongAutoCov}
 \operatorname{Cov}\left[Y_t,Y_{t+h}\right] = \sigma^2 H(2H-1) \int_0^t \int_0^{t+h} R(u)R(v) \left|h+u-v \right|^{2H-2} du\,dv.
\end{align}
In this section we will specialize to the case where \(H>1/2\), and assume that~\eqref{eq:cond1} holds. Our aim is to verify that, under these assumptions, the process \(Y\) preserves the long-memory property of the fractional Brownian motion as time tends to infinity. To this end, we use the fact that
\begin{align}
    \gamma(h) &= \lim_{t\to\infty} \operatorname{Cov}\left(Y_t,Y_{t+h}\right)\nonumber\\
    &= \sigma^2 H(2H-1)\int_0^\infty \int_0^\infty R(u)R(v)\left|h+u-v\right|^{2H-2}\,du\,dv.
    \label{eq:AutcovarianceLong}
\end{align}
To show that the long-memory property is preserved when \(H>1/2\), we need to show that
\begin{equation} \label{eq:long-memory}
\gamma(h)\sim Ch^{2H-2}\qquad \text{as } h\to\infty.
\end{equation}
for some constant $C>0$ \citep[p.~42]{palma2007longmemory}. To this end, we have the following.

\begin{proposition}\label{prop:longAuto}
The auto-covariance function \(\gamma(h)\) defined in \eqref{eq:AutcovarianceLong} satisfies~\eqref{eq:long-memory}, where
\begin{equation}
    C=\frac{\sigma^2 H(2H-1)}{\left(b+\sum_{j=1}^N c_j\right)^2}.
\end{equation}
\end{proposition}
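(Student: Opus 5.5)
Set $K(h,u,v) = h^{2-2H}\,|h+u-v|^{2H-2}$. Multiplying \eqref{eq:AutcovarianceLong} by $h^{2-2H}$ gives
\[
h^{2-2H}\gamma(h) = \sigma^2 H(2H-1)\int_0^\infty\int_0^\infty R(u)R(v)\,K(h,u,v)\,du\,dv .
\]
For fixed $u,v\ge0$ we have $K(h,u,v)=|1+(u-v)/h|^{2H-2}\to1$ as $h\to\infty$. If the limit can be passed inside the integral, then
\[
h^{2-2H}\gamma(h)\to\sigma^2H(2H-1)\left(\int_0^\infty R(s)\,ds\right)^2 .
\]
By \eqref{eq:InfIntegral} in Corollary \ref{cor:stat_mean}, this equals $\sigma^2H(2H-1)/(b+\sum_j c_j)^2$, which is the claimed constant $C$. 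It is positive because $b+\sum_j c_j\neq0$ by \eqref{eq:cond1} and $H>1/2$. So the whole proof reduces to justifying the limit interchange.

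The interchange is the main obstacle. The exponent $2H-2\in(-1,0)$ is negative, so the kernel blows up on the line $v=u+h$ and cannot be bounded uniformly in $h$. I would split the domain into two regions using \eqref{eq:Rbound}, $|R(s)|\le Me^{-\lambda s}$.

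On the region $A_h=\{|u-v|\le h/2\}$ we have $|h+u-v|\ge h/2$, so $K\le 2^{2-2H}$. The integrand is then dominated by $2^{2-2H}M^2e^{-\lambda(u+v)}$, which is integrable. Dominated convergence, applied with the indicator $\mathds{1}_{A_h}\to1$ pointwise, gives the limit $\left(\int_0^\infty R\right)^2$.

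On the complement $\{|u-v|>h/2\}$ we have $u+v>h/2$. Hence
\[
|R(u)R(v)|\le M^2 e^{-\lambda(u+v)/2}e^{-\lambda h/4}.
\]
I would then substitute $p=v-u$, $r=u+v$, as in the proof of Proposition \ref{prop:stationary_process}. Integrating out $r$ leaves at most a constant times
\[
e^{-\lambda h/4}\,h^{2-2H}\int_{|p|>h/2,\ |p|<r}|h-p|^{2H-2}e^{-\lambda r/2}\,dr\,dp .
\]
The singularity $|h-p|^{2H-2}$ is locally integrable because $2H-2>-1$. Integrating $p$ first over $|p|<r$ bounds this by a constant times
\[
e^{-\lambda h/4}\,h^{2-2H}\int_0^\infty (r+h)^{2H-1}e^{-\lambda r/2}\,dr ,
\]
which is $O(h\,e^{-\lambda h/4})\to0$.

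Combining the two regions gives $h^{2-2H}\gamma(h)\to C$, that is, $\gamma(h)\sim Ch^{2H-2}$. Finally, I would confirm that \eqref{eq:AutcovarianceLong} is a legitimate starting point: it is the limit of \eqref{eq-LongAutoCov} as $t\to\infty$, and this follows by the same exponential domination.
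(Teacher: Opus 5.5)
Your proof is correct and takes essentially the same route as the paper. Both normalise by $h^{2H-2}$, cut out the region around the singular line $v=u+h$, and apply dominated convergence where the kernel is bounded by $2^{2-2H}$; the remaining piece is then shown to be $O(h\,e^{-ch})$ via \eqref{eq:Rbound}. The only differences are cosmetic: the paper cuts at $|1+(u-v)/h|<1/2$ (i.e.\ $v\in(u+h/2,u+3h/2)$) and integrates in $v$ for fixed $u$, whereas you cut at $|u-v|>h/2$ and bound that piece in the rotated coordinates $p=v-u$, $r=u+v$.
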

\begin{proof} 
    Let us write $\alpha = 2H - 2 \in (-1,0)$, and using \eqref{eq:AutcovarianceLong} define:
    \begin{align}\label{eq:int_def}
       I(h) := \frac{\gamma(h)}{\sigma^2 H(2H-1) h^\alpha} =  \int_0^\infty  \int_0^\infty  R(u) R(v) \left| 1+ \frac{u-v}{h} \right|^\alpha du\,dv.
    \end{align} To obtain the desired result, it is sufficient to show that
    \[ \lim_{h\to \infty}  I(h) =   \left(\int_0^\infty R(u) du \right)^2 = \frac{1}{\left(b+\sum_{j=1}^N c_j\right)^2}. \]
    Observe immediately that
    $\left| 1+ \frac{u-v}{h}\right|^\alpha \rightarrow 1$ as $h\rightarrow\infty$ because $\alpha \in (-1,0)$. However, there is a singularity in \eqref{eq:int_def} when $1+\frac{u-v}{h} \approx 0$, so we need to isolate that region when dealing with the integral. Let us define
   \begin{align*}
       E_h & := \left\{(u,v) \in [0,\infty)^2: \left| 1+\frac{u-v}{h}\right|<\frac{1}{2} \right\},\\
        E_h^c & := \left\{(u,v) \in [0,\infty)^2: \left| 1+\frac{u-v}{h}\right|\geq \frac{1}{2} \right\}
   \end{align*}
   and decompose the integral of interest as
   \[ I(h) = I_{E_h}(h) + I_{E_h^c}(h),\]where\begin{align*}
       I_{E_h}(h) & =  \iint_{E_h} R(u) R(v) \left| 1+ \frac{u-v}{h} \right|^\alpha du\,dv,\\
        I_{E_h^c}(h) & = \iint_{E_h^c} R(u) R(v) \left| 1+ \frac{u-v}{h} \right|^\alpha du\,dv.
   \end{align*}
   
   Let us first compute the integral $I_{E_h^c}(h)$. Since $\alpha<0$, we have \[\left| 1+\frac{u-v}{h}\right|^\alpha \leq \left(\frac{1}{2}\right)^\alpha\] on $E_h^c$. Moreover, a $\mathds{1}_{E^c_h}(u,v) \to 1$ as $h\to \infty$, by application of the Dominated Convergence Theorem, we arrive at 
\begin{align*}
     I_{E_h^c}(h) & = \int_0^\infty  \int_0^\infty \mathds{1}_{E^c_h}(u,v)  R(u) R(v) \left| 1+ \frac{u-v}{h} \right|^\alpha du\,dv\\
     & \to \left( \int_0^\infty R(s) ds\right)^2 \text{ as } h\to \infty.
\end{align*}

     We now show that $I_{E_h}(h)\to 0$ as $h\to \infty$. Note that
     \begin{equation}\label{eq:setEh}
         E_{h} =\{ (u,v) \in [0,\infty)^2: v\in [u+h/2,u+3h/2]\}.
     \end{equation} From \eqref{eq:setEh} and the bound in \eqref{eq:Rbound}, one obtains
   \begin{align}
       \left| I_{E_h}(h)\right| & \leq \int_0^\infty \int_{u + h/2}^{u + 3h/2} \left|R(u) R(v) \right|  \left| 1+\frac{u-v}{h}\right|^\alpha dv du \nonumber\\
       & \leq M^2 \int_0^\infty \int_{u + h/2}^{u + 3h/2} e^{-\lambda (u+v)} h^{-\alpha}  \left| h+u-v\right|^\alpha dv du.\label{eq:Eh1}
   \end{align}  For a fixed $u$, the change of variable $z=h+u-v$ gives us
   \begin{align}
       \int_{u + h/2}^{u + 3h/2} e^{-\lambda v}   \left| h+u-v\right|^\alpha dv & = e^{-\lambda (u+h)} \int_{-h/2}^{h/2} e^{\lambda z} |z|^\alpha dz \nonumber\\
       & \leq 2 e^{-\lambda (u+h/2)} \int_0^{h/2} z^\alpha dz \nonumber\\
       & = \frac{h^{\alpha+1}}{(\alpha +1)2^\alpha} e^{-\lambda (u+h/2)}. \label{eq:Eh2}
   \end{align} Combining \eqref{eq:Eh1} and \eqref{eq:Eh2}, we arrive at
    \begin{align}
       \left| I_{E_h}(h)\right| & \leq  \frac{M^2 h}{(\alpha +1)2^\alpha} e^{-\lambda h /2} h \int_0^{\infty} e^{-2\lambda u} du\nonumber\\
       &= \frac{M^2}{(\alpha +1)2^{\alpha+1} \lambda} e^{-\lambda h /2} h \to 0 \text{ as } h\to\infty. \nonumber 
   \end{align} 
   
   In summary, we showed that \[I(h) \to \left(\int_0^\infty R(u) du \right)^2 \text{ as } h\to \infty.\] Hence by the definition of $I(h)$ in \eqref{eq:int_def}, we have that
  \[\gamma(h)\sim \sigma^2 H(2H-1) h^{2H-2} \left(\int_0^\infty R(u) du \right)^2 \text{ as } h\to\infty.\] Finally, equation \eqref{eq:InfIntegral} yields the desired result.
\end{proof}

\section{Stationary solution}\label{sect:Stationary}

The aim of this section is to estable that there exists a unique stationary solution for the stochastic differential equation \eqref{eq:Y}. We continue with our assumption that the associated delay differential equation \eqref{eq:deter} is asymptotically stable for every choice of the delay parameters $\tau_1,\ldots,\tau_N>0$. in other words, that condition \eqref{eq:cond1} is satisfied. 

In this section, equation \eqref{eq:Y} is considered on the whole real line. To this end, define the process $W^H = (W_t^H)_{t\in \mathds{R}}$ to be a double-sided fractional Brownian motion \citep[p.~60]{mishura2017theory}. A process $Y=(Y_t)_{t\in\mathds{R}}$ is then called a solution if, for every $s<t \in \mathds{R}$,
\[
Y_t-Y_s
=
\int_s^t
\left(
a+bY_u+\sum_{j=1}^N c_j Y_{u-\tau_j}
\right)\,du
+
\sigma\left(W_t^H-W_s^H\right).
\]
We will show that the process $Y^{\stat} = (Y_t^{\stat})_{t\in \mathds{R}}$  which is defined as
\begin{equation} \label{eq:StatSol}
    Y^{\stat}_t  = \mu  + \sigma \int_{-\infty}^t R(t-s) dW_s^H \text{ with } \mu = -\frac{a}{b+\sum_{j=1}^N c_j},
\end{equation} is the unique stationary solution of the stochastic differential equation \eqref{eq:Y}.

First, we note that the stochastic integral that appears in \eqref{eq:StatSol} is well defined as a Riemann-Stieltjes integral, as shown in the next remark.

\begin{remark}
   Proceeding as in the proof of Proposition A.1 in  \citet{Cheridito2003}, we have that $W^H$ is almost surely continuous and that
\begin{equation}\label{eq:limit1}
\lim_{|s|\to \infty} \frac{W_s^H}{|s|^\eta}  = 0 \text{ almost surely  for all } \eta>H.
\end{equation}
By the bounds in equations \eqref{eq:Rbound}--\eqref{eq:Rprimebound}, we have that the Riemann integral
\begin{align}
\int_{-\infty}^t W_s^H R'(t-s) ds \label{eq:RiemannIntegral}
\end{align} is well defined for $t\in \mathds{R}$. To see this, notice that by equation \eqref{eq:limit1}, we can find a random $L=L(\omega)>0$ that satisfies,
\begin{align}
\left|W_s^H\right| \leq |s|^\gamma \text{ for } |s|\geq L.
\end{align} The bound in \eqref{eq:Rprimebound} then gives
\begin{align*}
\int_{-\infty}^t \left|W_s^H R'(t-s) \right| ds & \leq C_R \int_{-\infty}^t \left|W_s^H  \right| e^{-\lambda (t-s)} ds\\
& \leq C_R \int_{-L}^t |W_s^H| e^{-\lambda (t-s)} ds   + C_R \int_{-\infty}^{-L} \left|s\right|^{\gamma} e^{-\lambda (t-s)} ds \\&<\infty.
\end{align*} Hence the Riemann integral \eqref{eq:RiemannIntegral} is well defined. 

Finally, from Theorem 2.21 of \citet{wheeden1977measure} we obtain that the stochastic integral in \eqref{eq:StatSol} is well defined as a Riemann-Stieltjes integral, and it satisfies
\begin{align*}
\int_{-\infty}^t R(t-s) dW_s^H  &= \lim_{A \to -\infty}   \int_{A}^t R(t-s) dW_s^H\\
& = W_t^H - \lim_{A\to- \infty}W_A^H R(t-A) - \lim_{A\to -\infty} \int_{A}^t W_s^H dR(t-s)
\end{align*} for all $t$, and where we recall that $R(0)=1$. From  the exponential bound  in \eqref{eq:Rbound} and the result in \eqref{eq:limit1}, we obtain that
\[  \lim_{A\to- \infty}W_A^H R(t-A) =0 \text{ almost surely}.\]
Finally, since the integral \eqref{eq:RiemannIntegral} is well defined, we have that
\begin{align}\label{eq:FractionalIntegral}
\int_{-\infty}^t R(t-s) dW_s^H & = W_t^H + \int_{-\infty}^t W_s^H R'(t-s) ds.
\end{align}
  
\end{remark}

To prove that  $Y^{\stat}$ is the unique stationary solution of the differential equation \eqref{eq:Y}, we split the proof into two parts. In the first part, we prove that the process $Y^{\stat}$ is stationary and satisfies equation \eqref{eq:Y}. The second part is devoted to uniqueness.

\subsection{Stationarity and solution property}
Before showing that the process $Y^{\stat}$ defined in \eqref{eq:StatSol} satisfies the differential equation \eqref{eq:Y}, let us show that its distribution is strictly stationary. To that end, we compute the auto-covariance function of $Y^{\stat}$ using the $M$-operator; see Appendix \ref{sect:laplace}.  

\begin{proposition}\label{prop:stat}
    The process $Y^{\stat}$ defined in \eqref{eq:StatSol} is a strictly stationary process with mean and auto-covariance
    \begin{align}
        E\left[Y_t^{\stat}\right] & = \mu, \label{eq:EspStat} \\
        \gamma(h) := \operatorname{Cov}[Y_t^{\stat},Y_{t+h}^{\stat}] & = \frac{\sigma^2 \kappa_H}{2\pi} \int_{\mathds{R}} e^{i \lambda h} \left| \widehat{R}(\lambda) \right|^2 \left|\lambda \right|^{1-2 H} d\lambda,\label{eq:AutoCov_stat}
    \end{align} where
    \begin{align}
        \widehat{R} (w) &= \frac{1}{i w - b - \sum_{j=1}^N c_j e^{-iw \tau_j}},\label{eq:hatR}\\
        \kappa_H & = \sin(\pi H) \Gamma(2H+1).\nonumber
    \end{align}
\end{proposition}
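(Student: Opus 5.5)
Since $Y^{\stat}$ is a centred Gaussian functional of the double-sided fractional Brownian motion shifted by $\mu$, strict stationarity follows once its mean is constant and its covariance depends only on the lag. The plan is to obtain both from the representation \eqref{eq:FractionalIntegral} and then identify the covariance with the spectral formula \eqref{eq:AutoCov_stat}.

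\textbf{Mean.} By \eqref{eq:FractionalIntegral}, $Y^{\stat}_t-\mu=\sigma\bigl(W_t^H+\int_{-\infty}^t W_s^H R'(t-s)\,ds\bigr)$. Fubini applies because $E|W_s^H|\le C|s|^H$ and $R'$ decays exponentially by \eqref{eq:Rprimebound}. Hence the expectation vanishes and \eqref{eq:EspStat} follows.

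\textbf{Stationarity.} I will use the stationarity of increments of $W^H$: for every $c$, the shifted process $(W^H_{s+c}-W^H_c)_s$ has the same law as $W^H$. A direct argument is to write $\int_{-\infty}^{t}R(t-s)\,dW^H_s$ as a Riemann--Stieltjes integral in the increments only. Substituting $s=t-r$ and noting $dW^H_{t-r}=d(W^H_{t-r}-W^H_t)$ shows that $Y^{\stat}_{t+c}$ is the same measurable functional of the shifted process $(W^H_{s+c}-W^H_{c})_s$ as $Y^{\stat}_t$ is of $W^H$. This gives equality of all finite-dimensional laws, i.e.\ strict stationarity. As a consequence, $\gamma(h)=\operatorname{Cov}[Y^{\stat}_0,Y^{\stat}_h]$ depends only on $h$.

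\textbf{Spectral formula.} To obtain \eqref{eq:AutoCov_stat} I will use the $M$-operator / spectral representation of fBm from Appendix~\ref{sect:laplace}. For deterministic $f\in L^1\cap L^2$ with suitable decay, this representation gives
\[
E\Big[\int_{\mathds R} f\,dW^H\int_{\mathds R} g\,dW^H\Big]=\frac{\kappa_H}{2\pi}\int_{\mathds R}\widehat f(\lambda)\overline{\widehat g(\lambda)}\,|\lambda|^{1-2H}d\lambda .
\]
Apply it with $f=R(t-\cdot)\mathds 1_{(-\infty,t]}$ and $g=R(t+h-\cdot)\mathds 1_{(-\infty,t+h]}$. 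Their Fourier transforms are $e^{-i\lambda t}\widehat R(\lambda)$ up to conjugation, so the product gives $e^{i\lambda h}|\widehat R(\lambda)|^2$. It remains to compute $\widehat R(w)=\int_0^\infty e^{-iws}R(s)\,ds$ from the delay equation \eqref{eq:Rdeter}. Multiply $R'=bR+\sum_j c_jR(\cdot-\tau_j)$ by $e^{-iws}$ and integrate over $(0,\infty)$. Integration by parts uses $R(0)=1$ and the exponential decay \eqref{eq:Rbound}, and $R=0$ on $(-\infty,0)$ handles the delayed terms. This gives $iw\widehat R-1=b\widehat R+\sum_j c_je^{-iw\tau_j}\widehat R$, which is \eqref{eq:hatR}. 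The denominator does not vanish on $\mathds R$ because $\widehat R$ is a finite bounded integral; alternatively, \eqref{eq:cond1} excludes imaginary roots of the characteristic equation.

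\textbf{Main obstacle.} The delicate step is justifying the isometry for the specific integrand when $H<1/2$. There the covariance kernel is not a positive function, and the Riemann--Stieltjes pathwise integral must be matched with the $L^2$ (Wiener-integral) definition. I would handle this by truncating to $\int_A^t$. On compact intervals the pathwise and Wiener integrals agree for the finite-variation integrand $R$, and the spectral isometry holds there. I would then let $A\to-\infty$: the truncated integrals converge almost surely by the previous remark, and in $L^2$ because the spectral integrand is dominated. The domination uses $|\widehat{f_A}(\lambda)|\le C\min(1,|\lambda|^{-1})$, since $R$ has bounded variation with exponential tails. This bound makes $|\lambda|^{1-2H}|\widehat{f_A}|^2$ integrable near infinity for every $H\in(0,1)$, and near zero it is integrable because $1-2H>-1$.
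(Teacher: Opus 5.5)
Your proposal is correct. The spectral part follows the paper's proof. The paper also writes the integral as $\int_{\mathds{R}} f_t\,dW^H$ with $f_t(s)=R(t-s)\mathds{1}_{\{t-s\ge 0\}}$ and invokes $R\in L^2_H(\mathds{R})$ (Proposition \ref{prop:SpaceL2h}), the $M$-operator identity \eqref{eq:FracIntMop} and Parseval--Plancherel. It then computes $\widehat{f_t}(\lambda)=e^{-i\lambda t}\widehat{R}(-\lambda)$ and gets $\widehat{R}$ from the Laplace transform of the delay equation, which is exactly your integration by parts.

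Where you genuinely differ is the stationarity step. The paper gets it as a by-product of the covariance formula: the mean is constant and $\operatorname{Cov}[Y^{\stat}_t,Y^{\stat}_{t+h}]$ depends only on $h$, so weak stationarity plus Gaussianity gives strict stationarity. You instead derive strict stationarity directly from the stationary increments of the two-sided fBm. You show that $Y^{\stat}_{t+c}$ is the same path functional of $(W^H_{s+c}-W^H_c)_s$ as $Y^{\stat}_t$ is of $W^H$. This argument does not depend on the spectral computation or on Gaussianity, and it actually gives joint stationarity of $Y^{\stat}$ with the noise increments. The price is a routine measurability check on path space.

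Your truncation argument also fills a step the paper skips. It identifies the pathwise improper Riemann--Stieltjes integral \eqref{eq:FractionalIntegral} with the $M$-operator Wiener integral, using the uniform bound $|\widehat{f_A}(\lambda)|\le C\min(1,|\lambda|^{-1})$ and dominated convergence. The paper simply asserts that the pathwise integral equals $\int f_t\,dW^H$ in the sense of \eqref{eq:FracIntMop}. That identification is what licenses the isometry, especially for $H<1/2$.

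On compact intervals you take the agreement of the two integrals for granted. It follows from the same Fourier estimate: the Riemann-sum step approximations of $R(t-\cdot)\mathds{1}_{[A,t]}$ have Fourier transforms bounded by $C/|\lambda|$ uniformly and converge in $L^2$. So they converge in the $|\lambda|^{1-2H}$-weighted norm. With that, your version is the more complete of the two.
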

\begin{proof}
    The expected value in \eqref{eq:EspStat} comes from the fact that the stochastic integral in \eqref{eq:StatSol} has expected value zero. To compute the result in \eqref{eq:AutoCov_stat}, we use the $M$-operator, and we proceed as in the proof of Proposition 2.1 in \citet{KIM2024102155}. To that end, we express the stochastic integral  in \eqref{eq:StatSol} as
    \[  \int_{-\infty}^t R(t-s) dW_s^H  =  \int_{-\infty}^\infty f_t(s) dW_s^H  \text{ for } t\in\mathds{R},\]
    where \[f_t(s)= R(t-s) \mathds{1}_{\{(t-s)\geq 0\}}.\] From Proposition \ref{prop:SpaceL2h} in Appendix \ref{sect:laplace} we have $R\in L^2_H(\mathds{R})$, and hence $f_t \in L^2_{H}(\mathds{R})$ for every $t\in \mathds{R}$. This implies that the $M$-operator acting on the function $f_t$ is well defined.

Using \eqref{eq:FracIntMop} in Appendix \ref{sect:laplace} and the Parseval–Plancherel identity, we can express the auto-covariance of $Y^{\stat}$ as
    \begin{align*}    \operatorname{Cov}\left[Y_t^{\stat},Y_{t+h}^{\stat}\right]  
    & = E\left[\sigma\left(\int_{-\infty}^\infty f_t(s) dW_s^H \right) \sigma\left(\int_{-\infty}^\infty f_{t+h}(s) dW_s^H \right)  \right]\\
    & = \sigma^2 \int_{-\infty}^\infty Mf_t(\lambda) \overline{Mf_{t+h}(\lambda)} d\lambda\\
    & = \frac{\sigma^2 }{2\pi} \int_{-\infty}^\infty \widehat{Mf_t}(\lambda) \overline{\widehat{Mf_{t+h}}(\lambda)} d\lambda \\
      & = \frac{\sigma^2 \kappa_H}{2\pi} \int_{-\infty}^\infty |\lambda|^{1-2H}\widehat{f_t}(\lambda) \overline{\widehat{f_{t+h}}(\lambda)} d\lambda,
    \end{align*} where
    \[ \widehat{f_t}(\lambda) = \int_{\mathds{R}} e^{-i \lambda u} f_t(u) du. \]
By  a change of variables, we arrive at
\begin{align*}
    \widehat{f_t}(\lambda) &= \int_{\mathds{R}} e^{-i \lambda u} f_t(u) du\\
    & = e^{-i \lambda t}\int_0^{\infty} e^{i \lambda
    v}  R(v) dv\\
     & = e^{-i \lambda t}\int_{\mathds{R}} e^{i \lambda
    v}  R(v) dv\\
    & =  e^{-i \lambda t} \widehat{R}(-\lambda).
\end{align*} Similarly, we have that
\begin{align*}
    \widehat{f_{t+h}}(\lambda) 
    & =  e^{-i \lambda (t+h)} \widehat{R}(-\lambda),
\end{align*} Finally, 
\begin{align*}
    \widehat{f_t}(\lambda) \overline{\widehat{f_{t+h}}(\lambda)} 
    & =  e^{-i \lambda t} \widehat{R}(-\lambda) e^{i \lambda (t+h)} \overline{\widehat{R}(-\lambda)} \\
    & = e^{i \lambda h} \left|\widehat{R}(-\lambda) \right|^2 = e^{i \lambda h} \left|\widehat{R}(\lambda) \right|^2,
\end{align*} where the last equality comes from the fact that $R$ is real-valued. Using the Laplace transform of $R$ in Appendix \ref{sect:laplace}, we arrive at the  result in \eqref{eq:AutoCov_stat}. So we have just proved that the process $Y^{\stat}$ is weakly stationary. Since $Y^{\stat}$ is a Gaussian process,  weak stationarity implies strict stationarity \citep[p. 123]{CramerLeadbetter1967}. 
\end{proof}

\begin{remark}\label{remarkdensity}
In Proposition \ref{prop:stat} we computed the power spectral density of the stationary process $Y^{\stat}$. It is directly related to the auto-covariance function $Y^{\stat}$ via the inverse Fourier transform, i.e.,
\[\gamma(h)  = \int_{\mathds{R}} e^{i u h} f_Y(u) du \text{ for } h\in \mathds{R}; \] where $f_Y$ is the power spectral density of the process $Y_{stat}$;  see Section 7.4 by \citet{CramerLeadbetter1967}. From the result obtained in Proposition \ref{prop:stat}, we have that
\[ f_Y(u) = \frac{\sigma^2 \kappa_H}{2\pi}  \left| \widehat{R}(u) \right|^2 \left|u\right|^{1-2 H}.  \] 
\end{remark}

The following result shows that the stationary process $Y^{\stat}$ maintains the long-memory behavior of the fractional Brownian motion when $H>1/2$.

\begin{corollary}
    Assume that condition \eqref{eq:cond1} holds and that $H>1/2$. Then the process $Y^{\stat}$ defined in \eqref{eq:StatSol} has long memory. More precisely,
    \[ \gamma(h) \sim C h^{2H-2} \text{ as } h\to \infty,\] where
     \[ C=\frac{\sigma^2 H(2H-1)}{\left(b+\sum_{j=1}^N c_j\right)^2}.\]
\end{corollary}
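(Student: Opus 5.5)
The plan is to show that the auto-covariance of $Y^{\stat}$ coincides with the limiting auto-covariance \eqref{eq:AutcovarianceLong} from Section \ref{sect:LongMemory}, and then to invoke Proposition \ref{prop:longAuto} directly.

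The first step is to compute $\operatorname{Cov}[Y^{\stat}_t,Y^{\stat}_{t+h}]$ in the time domain. Since $H>1/2$, the isometry of the fractional Wiener integral \citep[Chapter 6]{biagini2008stochastic} applies to deterministic integrands in $L^2_H(\mathds{R})$. This is exactly the property used to obtain \eqref{eq-LongAutoCov}, and Proposition \ref{prop:SpaceL2h} guarantees that $f_t(s)=R(t-s)\mathds{1}_{\{t-s\ge0\}}$ lies in $L^2_H(\mathds{R})$. The isometry gives
\[
\operatorname{Cov}[Y^{\stat}_t,Y^{\stat}_{t+h}]=\sigma^2H(2H-1)\int_{\mathds{R}}\int_{\mathds{R}} f_t(s)f_{t+h}(r)|s-r|^{2H-2}\,ds\,dr .
\]
Substituting $u=t-s$ and $v=t+h-r$ turns $|s-r|$ into $|h+u-v|$. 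The integral then becomes
\[
\sigma^2H(2H-1)\int_0^\infty\int_0^\infty R(u)R(v)|h+u-v|^{2H-2}\,du\,dv,
\]
which is precisely \eqref{eq:AutcovarianceLong}. As a cross-check, the double integral is absolutely convergent by \eqref{eq:Rbound}, since $|x|^{2H-2}$ is locally integrable and the exponential weights control the tails.

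An alternative route, if one prefers to stay with the spectral formula \eqref{eq:AutoCov_stat}, is as follows. One can note that $\frac{\kappa_H}{2\pi}|\lambda|^{1-2H}$ is, for $H>1/2$, the Fourier transform of $H(2H-1)|x|^{2H-2}$ in the distributional sense. Plancherel applied to $|\widehat R|^2$, which is the transform of the autocorrelation $R*\widetilde R$, then yields the same double integral. The main obstacle in this route is justifying that identification of constants rigorously. For that reason I would rely on the time-domain isometry argument instead.

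Once the two auto-covariance functions are known to coincide, Proposition \ref{prop:longAuto} gives $\gamma(h)\sim \sigma^2H(2H-1)\big(\int_0^\infty R\big)^2h^{2H-2}$ as $h\to\infty$. Corollary \ref{cor:stat_mean}, through \eqref{eq:InfIntegral}, then identifies the constant as $C=\sigma^2H(2H-1)/(b+\sum_j c_j)^2$. Since $H>1/2$ and $b+\sum_j c_j\neq0$ by \eqref{eq:cond1}, we have $C>0$, so \eqref{eq:long-memory} holds and $Y^{\stat}$ has long memory.
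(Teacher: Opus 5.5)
Your proposal is correct and matches the paper's proof essentially step for step. The paper also uses the $H>1/2$ covariance formula for fractional Wiener integrals, applies the substitution $u=t-s$, $v=t+h-r$ to rewrite $\gamma(h)$ as \eqref{eq:AutcovarianceLong}, and then invokes Proposition \ref{prop:longAuto}, whose constant comes from \eqref{eq:InfIntegral}; your remarks on absolute convergence of the double integral and on $C>0$ are minor extra care the paper leaves implicit, and the spectral alternative you set aside is not needed.
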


\begin{proof}
Since $H>1/2$, properties of the fractional Wiener integral give us
\begin{align}
\gamma(h) & = \operatorname{Cov}\left[Y_t^{\stat},Y_{t+h}^{\stat}\right] \nonumber\\
& = \sigma^2 E\left[\int_{-\infty}^t R(t-s) dW_s^H \int_{-\infty}^{t+h} R(t+h - r) dW_r^H \right] \nonumber\\ 
& = \sigma^2 H(2H-1) \int_{-\infty}^t \int_{-\infty}^{t+h} R(t-s) R(t+h-r) |s-r|^{2H-2} dr ds. \label{eq:integralLongMemory} 
\end{align} The change of variables $u=t-s$ and $v=t+h-r$ allows us to rewrite the auto-covariance in \eqref{eq:integralLongMemory} as
\begin{align*}
    \gamma(h) = \sigma^2 H(2H-1) \int_{0}^\infty \int_{0}^{\infty} R(u) R(v) |h+u-v|^{2H-2} du\,dv.
\end{align*} Hence the auto-covariance of $Y^{\stat}$ can be written as in equation \eqref{eq:AutcovarianceLong}. Hence, Proposition \ref{prop:longAuto} gives us the desired result. 
\end{proof}

Define the centered  process $X^{\stat} = \left(X^{\stat}_t\right)_{t\in \mathds{R}}$ as
\[ X_t^{\stat} = Y_t^{\stat} - \mu =  \sigma \int_{-\infty}^t R(t-s) dW_s^H \text{ for } t\in \mathds{R}. \] We will show that the process $X^{\stat}$ satisfies the differential equation \eqref{eq:Y} with $a=0$.

\begin{proposition}\label{Prop:Xstat}
    The process $X^{\stat}$ satisfies the stochastic differential equation \eqref{eq:Y} with $a=0$.
\end{proposition}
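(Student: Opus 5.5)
We must show that for $s<t$,
\[
X^{\stat}_t-X^{\stat}_s=\int_s^t\Big(bX^{\stat}_u+\sum_{j=1}^N c_jX^{\stat}_{u-\tau_j}\Big)du+\sigma\big(W^H_t-W^H_s\big).
\]
The plan mirrors the proof of Proposition \ref{prop:sol:SDE}, using the pathwise representation \eqref{eq:FractionalIntegral} in place of \eqref{eq:StochInt1}.

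\textbf{Step 1: reduce to a differentiable function.} Write $X^{\stat}_t=\sigma W^H_t+\sigma g(t)$, where
\[
g(t)=\int_{-\infty}^t W^H_sR'(t-s)\,ds=\int_0^\infty W^H_{t-v}R'(v)\,dv .
\]
It then suffices to show that $g$ is absolutely continuous with
\[
g'(t)=b\big(W^H_t+g(t)\big)+\sum_{j=1}^N c_j\big(W^H_{t-\tau_j}+g(t-\tau_j)\big)
\]
for almost every $t$. Indeed, by \eqref{eq:FractionalIntegral}, $W^H_t+g(t)=X^{\stat}_t/\sigma$, so integrating $g'$ from $s$ to $t$ gives the required identity.

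\textbf{Step 2: compute $g'$ via the delay equation.} Use the delay equation \eqref{eq:Rdeter} for $R'$ and substitute:
\[
g(t)=b\int_0^\infty W^H_{t-v}R(v)\,dv+\sum_{j=1}^N c_j\int_{\tau_j}^\infty W^H_{t-v}R(v-\tau_j)\,dv .
\]
This uses $R(v-\tau_j)=0$ for $v<\tau_j$; note that $R'$ has no delta part at $v=\tau_j$ because $R$ is continuous. After the change of variable $w=v-\tau_j$, the second integral becomes $\int_0^\infty W^H_{t-\tau_j-w}R(w)\,dw$.

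Define $G(t)=\int_0^\infty W^H_{t-v}R(v)\,dv=\int_{-\infty}^t W^H_sR(t-s)\,ds$. Then
\[
g(t)=bG(t)+\sum_{j=1}^N c_j\,G(t-\tau_j).
\]
Next, differentiate $G$ by the Leibniz rule. Since $W^H$ is continuous, $R(0)=1$, and $R$ is Lipschitz with the exponential bound \eqref{eq:Rprimebound},
\[
G'(t)=W^H_t+\int_{-\infty}^t W^H_sR'(t-s)\,ds=W^H_t+g(t)=X^{\stat}_t/\sigma .
\]
Substituting into $g=bG+\sum_j c_jG(\cdot-\tau_j)$ gives $g'(t)=bX^{\stat}_t/\sigma+\sum_j c_jX^{\stat}_{t-\tau_j}/\sigma$, which is exactly what Step 1 requires. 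In fact $g$ is then $C^1$, not merely absolutely continuous.

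\textbf{Main obstacle: justifying the improper integrals.} The delicate part is making sure all improper integrals over $(-\infty,t]$ converge and can be differentiated under the integral sign. Convergence of $G$ and $g$ follows from the growth bound \eqref{eq:limit1}, $|W^H_s|\le |s|^{\eta}$ for $|s|\ge L(\omega)$, combined with the exponential bounds \eqref{eq:Rbound}--\eqref{eq:Rprimebound}, as in the remark preceding Proposition \ref{prop:stat}.

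For differentiation, I would avoid a Leibniz rule over an infinite interval. Instead, I would verify directly that
\[
G(t)-G(s)=\int_s^t\Big(W^H_u+\int_{-\infty}^u W^H_rR'(u-r)\,dr\Big)\,du .
\]
This follows by writing $R(t-r)-R(s-r)=\int_s^t R'(u-r)\,du$ for $r\le s$, handling the piece $r\in(s,t]$ separately, and applying Fubini. Fubini is justified by the absolute integrability of $|W^H_r|C_Re^{-\lambda(u-r)}$ over $(-\infty,u]\times[s,t]$. This integral identity is all that is needed: it gives $G'=W^H+g$ almost everywhere, and hence the differential form used in Step 2.
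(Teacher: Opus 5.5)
Your argument is correct, but it takes a different route from the paper's.

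\textbf{The paper's route.} The paper works with the stochastic integrals directly. It writes
$X_t^{\stat}-X_u^{\stat}=\sigma\int_{-\infty}^u (R(t-s)-R(u-s))\,dW_s^H+\sigma\int_u^t R(t-s)\,dW_s^H$
and replaces the increments of $R$ by $\int R'(r-s)\,dr$. It then swaps the $dr$ and $dW_s^H$ integrals by a ``stochastic Fubini theorem'' to reach
$X_t^{\stat}-X_u^{\stat}=\sigma\int_u^t\int_{-\infty}^r R'(r-s)\,dW_s^H\,dr+\sigma(W_t^H-W_u^H)$.
Only then does it insert $R'=bR+\sum_j c_jR(\cdot-\tau_j)$ inside the stochastic integral.

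\textbf{Your route.} You first turn everything into Lebesgue integrals of the continuous path $W^H$ via \eqref{eq:FractionalIntegral}. You insert the delay equation there to get $g=bG+\sum_j c_jG(\cdot-\tau_j)$. You then identify $G'=W^H+g=X^{\stat}/\sigma$ using ordinary Fubini, with the explicit dominating function $|W^H_r|C_Re^{-\lambda(u-r)}$. This is essentially the infinite-horizon analogue of the paper's proof of Proposition~\ref{prop:sol:SDE}, not of its proof of this proposition. The key algebraic idea is shared: the delay equation for $R'$ reproduces the delayed terms $X^{\stat}_{r-\tau_j}$.

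\textbf{What each buys.} Your version gives rigour with elementary tools. The paper never justifies two things it uses:
\begin{itemize}
\item the stochastic Fubini theorem for pathwise Riemann--Stieltjes integrals against $W^H$ over $(-\infty,u]$;
\item the existence of $\int_{-\infty}^r R'(r-s)\,dW_s^H$, whose integrand $R'$ jumps at the points $\tau_j$.
\end{itemize}
In your argument every interchange is checked path by path, using only the continuity and growth bound \eqref{eq:limit1} of $W^H$ and the exponential bounds on $R$ and $R'$. You do not even need differentiability of $g$: integrating the identity for $G$ gives $g(t)-g(s)=\int_s^t\bigl(b(W^H_u+g(u))+\sum_j c_j(W^H_{u-\tau_j}+g(u-\tau_j))\bigr)\,du$ directly.

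The paper's version is shorter. It is also closer to an $L^2$ Wiener-integral argument, which would carry over to settings where the integral is defined through the $M$-operator rather than pathwise by integration by parts.
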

\begin{proof} For all $u<t \in \mathds{R}$ we have 
     \begin{align}
         X_t^{\stat} - X_u^{\stat} 
         & =  \sigma \int_{-\infty}^u \left( R(t-s) - R(u-s)\right) dW_s^H + \sigma \int_u^t R(t-s) dW_s^H.\label{eq:DiffXstat}
     \end{align} By the fundamental theorem of calculus, we obtain
     \begin{align*}
          R(t-s) - R(u-s)= \int_u^t \frac{\partial }{\partial r} R(r-s) dr = \int_u^t R'(r-s) dr,
     \end{align*} and so we can express \eqref{eq:DiffXstat} as
          \begin{align}
         X_t^{\stat} - X_u^{\stat} 
         & =  \sigma \int_{-\infty}^u \left( \int_u^t R'(r-s) dr\right) dW_s^H + \sigma \int_u^t R(t-s) dW_s^H \nonumber\\
         & = \sigma \int_u^t \left(\int_{-\infty}^u R'(r-s) dW_s^H\right) dr + \sigma \int_u^t R(t-s) dW_s^H, \label{eq:DiffXstat2}
     \end{align} where, in the last equality, we apply stochastic Fubini`s Theorem. The fundamental theorem of calculus and stochastic Fubini`s Theorem, together with $R(0)=1$,  allow us to write the second stochastic integral in \eqref{eq:DiffXstat2} as
     \begin{align}
         \int_u^t R(t-s) dW_s^H & =\int_u^t\left( R(t-s) - R(s-s) +1\right) dW_s^H \nonumber \\
         & = \int_u^t \left(\int_s^t R'(r-s) dr\right) dW_s^H + W_t^H -W_u^H \nonumber \\
         & = \int_u^t \left(\int_u^r R'(r-s) dW_s^H\right) dr + W_t^H -W_u^H. \label{eq:DiffXstat3}
     \end{align} Equations \eqref{eq:DiffXstat}--\eqref{eq:DiffXstat3} then give
        \begin{align}
         X_t^{\stat} - X_u^{\stat} 
         & =  \sigma \int_u^t \int_{-\infty}^r R'(r-s) dW_s^H dr + \sigma \left( W_t^H - W_u^H\right).\label{eq:DiffXstat4}
     \end{align} Since $R$ satisfies the delay differential equation \eqref{eq:Rdeter}, this gives us,
     \begin{align}
         \int_{-\infty}^r R'(r-s) dW_s^H  & = b  \int_{-\infty}^r R(r-s)  dW_s^H  + \sum_{j=1}^{N} c_j \int_{-\infty}^r R(r-s-\tau_j)  dW_s^H \nonumber\\
         & =  \frac{b}{\sigma} X_r^{\stat} + \sum_{j=1}^{N} c_j \int_{-\infty}^{r-\tau_j} R(r-s-\tau_j)  dW_s^H \nonumber\\
         & = \frac{b}{\sigma} X_r^{\stat}  + \sum_{j=1}^{N} \frac{c_j}{\sigma}  X_{r-\tau_j}^{\stat}\label{eq:DiffXstat5}
     \end{align} Substituting equation \ref{eq:DiffXstat5} into equation \eqref{eq:DiffXstat4}, we arrive at
             \begin{align*}
         X_t^{\stat} - X_u^{\stat} 
         & =   \int_u^t  b X_r^{\stat}  + \sum_{j=1}^N c_j  X_{r-\tau_j}^{\stat} dr + \sigma \left( W_t^H - W_u^H\right),
     \end{align*} 
     as required.
\end{proof}

\begin{corollary}\label{cor:Stat}
    The process $Y^{\stat}$ satisfies the stochastic differential equation \eqref{eq:Y}.
\end{corollary}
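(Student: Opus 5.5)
The plan is to deduce this from Proposition \ref{Prop:Xstat} by an affine shift, mirroring the transformation used in the proof of Corollary \ref{cor:stat_mean}. By definition $Y_t^{\stat} = X_t^{\stat} + \mu$ for all $t\in\mathds{R}$, with $\mu = -a/(b+\sum_{j=1}^N c_j)$. Condition \eqref{eq:cond1} guarantees $b+\sum_{j=1}^N c_j\neq 0$, so $\mu$ is well defined. The key algebraic identity is
\[
a + b\mu + \sum_{j=1}^N c_j \mu = a + \mu\Big(b+\sum_{j=1}^N c_j\Big) = 0 .
\]

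Fix $s<t$ in $\mathds{R}$. Since $Y^{\stat}$ and $X^{\stat}$ differ by a constant, $Y_t^{\stat}-Y_s^{\stat} = X_t^{\stat}-X_s^{\stat}$. By Proposition \ref{Prop:Xstat} this equals
\[
\int_s^t \Big( bX_u^{\stat} + \sum_{j=1}^N c_j X_{u-\tau_j}^{\stat}\Big)du + \sigma\left(W_t^H-W_s^H\right).
\]
Substituting $X_u^{\stat} = Y_u^{\stat}-\mu$ and $X_{u-\tau_j}^{\stat} = Y_{u-\tau_j}^{\stat}-\mu$, the integrand becomes
\[
bY_u^{\stat} + \sum_{j=1}^N c_j Y^{\stat}_{u-\tau_j} - \mu\Big(b+\sum_{j=1}^N c_j\Big) = a + bY_u^{\stat} + \sum_{j=1}^N c_j Y^{\stat}_{u-\tau_j},
\]
by the identity above. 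This is exactly the solution property of \eqref{eq:Y} on the real line, as defined at the start of Section \ref{sect:Stationary}.

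There is no real obstacle here; the only point needing care is that the integrand is integrable on $[s,t]$. This holds because $Y^{\stat}$ has continuous paths, which follows from the representation \eqref{eq:FractionalIntegral} together with the continuity of $W^H$ and the bound \eqref{eq:Rprimebound}. It is in any case already implicit in Proposition \ref{Prop:Xstat}, since adding a constant does not affect integrability on a bounded interval.
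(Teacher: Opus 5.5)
Your proposal is correct and follows the paper's proof essentially line for line. You write $Y^{\stat}_t - Y^{\stat}_u = X^{\stat}_t - X^{\stat}_u$, apply Proposition \ref{Prop:Xstat}, substitute $X^{\stat} = Y^{\stat} - \mu$, and use $a = -\mu\bigl(b+\sum_{j=1}^N c_j\bigr)$; your extra remark on path continuity and integrability is something the paper leaves implicit.
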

\begin{proof} Applying Proposition \ref{Prop:Xstat}, we have that
\begin{align*}
 Y_t^{\stat} - Y_u^{\stat}  & =  X_t^{\stat} - X_u^{\stat} \\
         & =  \int_u^t\left(  b (X_r^{\stat} +\mu) + \sum_{j=1}^N c_j  (X_{r-\tau_j}^{\stat}+\mu)  -\mu\left(b + \sum_{j=1}^N c_j \right) \right) dr\\
         & \qquad + \sigma \left( W_t^H - W_u^H\right)\\
         & = \int_u^t\left( a +   b Y_r^{\stat} + \sum_{j=1}^N c_j  Y_{r-\tau_j}^{\stat} \right) dr + \sigma \left( W_t^H - W_u^H\right),
\end{align*} where in the last equality we used the fact that $a = -\mu \left(b +\sum_{j=1}^N c_j\right)$.
\end{proof}

\subsection{Uniqueness} 

Now we will prove that $Y^{\stat}$ is the unique stationary solution of \eqref{eq:Y}. To this end, we first establish uniqueness in law and then show that there is a unique stationary solution.

\begin{proposition}\label{eq:proplaw}
Any two stationary solutions of the stochastic differential equation \eqref{eq:Y} have the same Gaussian distribution.
\end{proposition}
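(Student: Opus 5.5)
Let $Y$ be an arbitrary stationary solution on the real line, driven by the two-sided fractional Brownian motion $W^H$. The plan is to show that $Y$ coincides pathwise, almost surely, with $Y^{\stat}$ from \eqref{eq:StatSol}. This forces $Y$ to be Gaussian with the mean and covariance of Proposition~\ref{prop:stat}, so any two stationary solutions share that law.

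\textbf{Step 1: variation of constants from a finite starting time.} Fix $s\in\mathds{R}$ and regard $(Y_u)_{u\in[s-\tau_N,s]}$ as a (random) initial segment. The method-of-steps argument behind Propositions~\ref{prop:DeterDelay} and~\ref{prop:sol:SDE} is pathwise, so it applies with a random continuous history and with the increments $W^H_{\cdot}-W^H_s$. For $t\ge s$ it gives
\[
Y_t = R(t-s)Y_s + a\int_0^{t-s}R(v)\,dv + \sum_{j=1}^N c_j\int_{s-\tau_j}^{s} R(t-r-\tau_j)Y_r\,dr + \sigma\int_s^t R(t-r)\,dW^H_r .
\]

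\textbf{Step 2: send $s\to-\infty$ with $t$ fixed.} By \eqref{eq:Rbound},
\[
|R(t-s)Y_s| + \sum_{j}|c_j|\int_{s-\tau_j}^{s}|R(t-r-\tau_j)||Y_r|\,dr \le C e^{-\lambda(t-s)}\sup_{r\in[s-\tau_N,s]}|Y_r| .
\]
Stationarity makes the law of $\sup_{r\in[s-\tau_N,s]}|Y_r|$ independent of $s$. Hence this term is tight while $e^{-\lambda(t-s)}\to0$, so it tends to $0$ in probability.

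The remaining terms converge as follows.
\begin{itemize}
\item $a\int_0^{t-s}R\to a\int_0^\infty R=\mu$, by Corollary~\ref{cor:stat_mean}.
\item $\sigma\int_s^tR(t-r)\,dW^H_r\to\sigma\int_{-\infty}^tR(t-r)\,dW^H_r$ almost surely, by the integration-by-parts representation and \eqref{eq:limit1} in the preceding Remark.
\end{itemize}
Therefore $Y_t=Y^{\stat}_t$ almost surely for each $t$. Because both processes are continuous, they are indistinguishable.

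\textbf{Step 3: conclude.} Since $Y^{\stat}$ is a Wiener-type integral of a deterministic kernel against $W^H$ plus a constant, it is Gaussian. Its law is determined by \eqref{eq:EspStat}--\eqref{eq:AutoCov_stat}. Every stationary solution therefore has this same Gaussian distribution, which is the statement of Proposition~\ref{eq:proplaw}.

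\textbf{Main obstacle.} The hardest step is justifying Step~1 with a random initial segment and verifying that the proof of Proposition~\ref{prop:sol:SDE} goes through unchanged. That proof is deterministic given the path, with $\phi$ replaced by $Y|_{[s-\tau_N,s]}$, so it should transfer directly. If it does not, I would instead argue on the difference $D=Y-Y^{\stat}$. By linearity, $D$ solves the deterministic delay equation \eqref{eq:deter} pathwise. Proposition~\ref{prop:DeterDelay} then bounds $|D_t|$ by $Ce^{-\lambda(t-s)}\sup_{[s-\tau_N,s]}|D|$, and since $D$ is jointly stationary with tight suprema, letting $s\to-\infty$ gives $D\equiv0$.
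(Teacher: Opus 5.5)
Your argument is correct, but it takes a different route from the paper.

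\textbf{The paper's route.} The paper never compares a stationary solution with $Y^{\stat}$ directly. It takes two arbitrary stationary solutions and notes that their difference $Z$ solves the deterministic delay equation from a fixed time $u$. By \eqref{eq:Rbound}, $Z_t\to 0$ almost surely as $t\to+\infty$. It then uses the shift invariance of each process separately to turn this forward coupling into equality of finite-dimensional laws. Gaussianity enters only at the end, from the existence of the Gaussian stationary solution $Y^{\stat}$.

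\textbf{Your route.} You run a pull-back argument instead ($s\to-\infty$ with $t$ fixed). Stationarity is used only to make $\sup_{[s-\tau_N,s]}|Y|$ tight, and you identify every stationary solution with $Y^{\stat}$ pathwise.

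\textbf{What each buys.} The paper's argument is more intrinsic. It needs neither the explicit form of $Y^{\stat}$ nor the almost sure convergence of $\int_s^t R(t-r)\,dW^H_r$ as $s\to-\infty$; only continuity of $Z$ and exponential decay of $R$. Yours gives indistinguishability from $Y^{\stat}$ in one stroke, which is the content of the paper's next proposition as well. It also avoids the second-moment bound $\sup_s E|Z_s|^2<\infty$ that the paper relies on there, because tightness replaces moments.

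\textbf{A small point on your fallback.} $D=Y-Y^{\stat}$ need not be jointly stationary, but you do not need that. The bound
\[
\sup_{[s-\tau_N,s]}|D|\le \sup_{[s-\tau_N,s]}|Y|+\sup_{[s-\tau_N,s]}|Y^{\stat}|
\]
is already tight by the individual stationarity of each process.
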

\begin{proof}
Let us assume that $Y^{(1)} = \left(Y_t^{(1)}\right)_{t\in \mathds{R}}$ and $Y^{(2)} = \left(Y_t^{(2)}\right)_{t\in \mathds{R}}$ are two stationary solutions of \eqref{eq:Y}. This means that
    \[ Y_t^{(k)} -Y_u^{(k)} = \int_{u}^t\left( a +b Y_s^{(k)} + \sum_{j=1}^N c_j Y_{s-\tau_j}^{(k)} \right)ds + \sigma (W_t^H -W_u^H)\]
     for $k=1,2$ and $t>u \in \mathds{R}$. Let us define the process $Z=(Z_t)_{t\in \mathds{R}}$ as
\[ Z_t = Y_t^{(1)} - Y_t^{(2)} \text{ for } t\in \mathds{R}.\] From the definitions of $Y^{(1)}$ and $Y^{(2)}$, we arrive at
\[Z_t - Z_u =  \int_{u}^t\left( b Z_s +\sum_{j=1}^N c_j Z_{s-\tau_j} \right)ds. \] For any fixed $u\in \mathds{R}$, the process $Z$ satisfies the initial value problem
\begin{align}
    dZ_t & = \left( bZ_t + \sum_{j=1}^N c_j  Z_{t-\tau_j} \right) dt \text{ for } t > u,\nonumber \\
    Z_t &=  Y_t^{(1)} - Y^{(2)}_t \text{ for }  t \in [u-\tau_N, u], \label{IVP_stat}
\end{align}  From Proposition \ref{prop:DeterDelay}, we have that $Z$ satisfies
\[ Z_t = Z_u R(t-u) +  \sum_{j=1}^N c_j \int_{-\tau_j}^0 R(t-\tau_j - s-u) Z_{s+u} ds \text{ almost surely. }  \] Due to the fact that \(W^H\) has continuous sample paths and the drift term is given by a
Lebesgue integral, every solution of \(\eqref{eq:Y}\) admits continuous sample paths. Therefore \(Z=Y^{(1)}-Y^{(2)}\) also has continuous sample paths. Hence, for
fixed \(u\in\mathds R\),
\begin{equation}
    \sup_{s\in[u-\tau_N,u]}|Z_s|<\infty
\text{ almost surely}.\label{eq:intialbound}
\end{equation}

Since the parameters satisfy condition \eqref{eq:cond1}, we have the exponential bound~\eqref{eq:Rbound}, and hence by \eqref{eq:intialbound} the process $Z$ satisfies
\[ |Z_t| \to 0 \text{ as } t\to \infty \text{  almost surely.}  \]

Let us now fix the natural number $m\in\mathds{N}$, and real numbers $s_1,\ldots,s_m \in \mathds{R}$. Since $Y^{(1)}$ and $Y^{(2)}$ are stationary, we have that
\begin{equation}\label{eq:StationaryProperty}
    \left(Y^{(k)}_{s_1},\ldots, Y^{(k)}_{s_m}\right) \stackrel{d}{=} \left(Y^{(k)}_{t+s_1},\ldots, Y^{(k)}_{t+s_m}\right),  \text{ for every } t\in \mathds{R} \text{ and } k=1,2.  
\end{equation} For every $\ell \in \{1,2,\ldots, m\}$ we have that
\[ Y^{(1)}_{s_\ell + t} - Y^{(2)}_{s_\ell+t}  =Z_{s_{\ell}+t} \to 0 \text{ almost surely as } t\to \infty.\]
Hence we have that
\begin{equation}\label{eq:limit0}
    \left(Y^{(1)}_{s_1+t},\ldots, Y^{(1)}_{s_m+t}\right)  - \left(Y^{(2)}_{s_1+t},\ldots, Y^{(2)}_{s_m+t}\right) \to \bm{0} \text{ almost surely } \text{ as } t\to \infty.
\end{equation} Using the result in equation \eqref{eq:limit0} and the dominated convergence theorem, we obtain that
\[E\left[\Phi \left(Y^{(1)}_{s_1+t},\ldots, Y^{(1)}_{s_m+t}\right)  - \Phi\left(Y^{(2)}_{s_1+t},\ldots, Y^{(2)}_{s_m+t}\right)   \right]\to 0  \text{ as } t\to \infty, \] for every $\Phi:\mathds{R}^m \to \mathds{R}$ bounded continuous function. From the stationarity property in \eqref{eq:StationaryProperty}, we have shown that
\[ E\left[\Phi \left(Y^{(1)}_{s_1},\ldots, Y^{(1)}_{s_m}\right)  - \Phi\left(Y^{(2)}_{s_1},\ldots, Y^{(2)}_{s_m}\right)   \right] =0,\] which implies that
\[ \left(Y^{(1)}_{s_1},\ldots, Y^{(1)}_{s_m}\right) \stackrel{d}{=} \left(Y^{(2)}_{s_1},\ldots, Y^{(2)}_{s_m}\right).\] We have just proved that $Y^{(1)}$ and $Y^{(2)}$ have the same finite-dimensional distribution.

Finally, we know that $Y^{\stat}$ is a stationary solution of equation \eqref{eq:Y}; see Corollary \ref{cor:Stat}. Due to the fact that $Y^{\stat}$ is a  Gaussian process, and  the uniqueness in law we arrive at the desired result.
\end{proof}

Finally, we will prove that $Y^{\stat}$ is the unique almost surely stationary solution. 
\begin{proposition}
Under condition \eqref{eq:cond1}, the process $Y^{\stat}$ defined in \eqref{eq:StatSol} is the unique  stationary solution of \eqref{eq:Y}.
\end{proposition}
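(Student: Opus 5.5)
Existence is already settled: Corollary \ref{cor:Stat} shows that $Y^{\stat}$ solves \eqref{eq:Y}, and Proposition \ref{prop:stat} shows that it is strictly stationary. What remains is pathwise uniqueness. Let $Y$ be any stationary solution driven by the same double-sided fractional Brownian motion $W^H$. We aim to show that $Y_t=Y^{\stat}_t$ almost surely for every $t\in\mathds{R}$, so that the two processes are modifications of each other. Since both have continuous sample paths, they are then indistinguishable.

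\textbf{Step 1 (reduce to a deterministic delay equation).} Set $Z_t=Y_t-Y^{\stat}_t$. As in the proof of Proposition \ref{eq:proplaw}, the noise cancels and $Z$ solves the deterministic delay equation \eqref{eq:deter} started at any time $u$. Proposition \ref{prop:DeterDelay} then gives
\[ Z_t = Z_u R(t-u) + \sum_{j=1}^N c_j \int_{-\tau_j}^0 R(t-\tau_j-s-u) Z_{s+u}\, ds, \qquad t\ge u. \]

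\textbf{Step 2 (send the starting time to $-\infty$).} Fix $t$ and let $u\to-\infty$. The bound \eqref{eq:Rbound} yields
\[ |Z_t| \le M\Big(1+\sum_{j=1}^N |c_j|\tau_j e^{\lambda\tau_j}\Big) e^{-\lambda(t-u)} \sup_{s\in[u-\tau_N,u]}|Z_s|. \]
The task is therefore to show that $e^{\lambda u}\sup_{s\in[u-\tau_N,u]}|Z_s|\to0$, at least along a subsequence $u_n\to-\infty$ and in probability.

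\textbf{Step 3 (control the sup, the main obstacle).} Stationarity of $Y$ and of $Y^{\stat}$ alone gives that $Z_u$ is tight, not that it decays. What we need is that $\sup_{s\in[u-\tau_N,u]}|Z_s|$ is tight in $u$. Since $Z$ is a difference of stationary processes, it suffices that $\sup_{s\in[-\tau_N,0]}|Y_s|$ and $\sup_{s\in[-\tau_N,0]}|Y^{\stat}_s|$ are almost surely finite random variables. For each $Y^{(k)}\in\{Y,Y^{\stat}\}$, stationarity gives
\[ \sup_{s\in[u-\tau_N,u]}|Y^{(k)}_s| \stackrel{d}{=} \sup_{s\in[-\tau_N,0]}|Y^{(k)}_s|, \]
and the right-hand side is finite because the paths are continuous. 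A difference of two tight families is tight, so $S_u:=\sup_{[u-\tau_N,u]}|Z_s|$ is tight in $u$.

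\textbf{Step 4 (conclude).} Tightness of $S_u$ implies $e^{-\lambda(t-u)}S_u\to0$ in probability as $u\to-\infty$. Hence $P(|Z_t|>\varepsilon)=0$ for every $\varepsilon>0$, so $Z_t=0$ almost surely for each $t$. This is the main subtle point: the argument uses stationarity of the path segments, which requires stationarity in the sense of the laws of the shifted processes and not only of their one-dimensional marginals. I would state this interpretation explicitly. Because $Z_t=0$ almost surely for every $t$, continuity of the paths upgrades this to $Z\equiv0$ almost surely, which proves uniqueness. Proposition \ref{eq:proplaw} is consistent with this result but is not needed for it.
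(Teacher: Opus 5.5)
Your proof is correct. It shares the paper's skeleton: form $Z$ as the difference of two solutions driven by the same $W^H$, represent $Z_t$ through Proposition \ref{prop:DeterDelay} started at time $u$, let $u\to-\infty$ using \eqref{eq:Rbound}, and upgrade ``$Z_t=0$ a.s.\ for each $t$'' to indistinguishability by path continuity. The two arguments differ in how they control $Z$ on the initial window $[u-\tau_N,u]$.

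The paper works in $L^2$. It first invokes Proposition \ref{eq:proplaw}, which says every stationary solution has the Gaussian law of $Y^{\stat}$, to get $\sup_s E|Z_s|^2<\infty$. An $L^2$ (Minkowski-type) version of your estimate then sends $E[Z_t^2]$ to zero as $u\to-\infty$.

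You work pathwise with the window supremum. You only need that supremum to be tight in $u$, which follows from stationarity and continuity, and you conclude by convergence in probability. This route is more economical. It uses no moments and does not rely on uniqueness in law, which instead becomes a consequence of your pathwise uniqueness. It would therefore also cover stationary solutions whose integrability is not known in advance. The paper's route keeps the final estimate at the level of one-dimensional second moments, but pays for it by proving uniqueness in law first.

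On the subtlety you flag: shift-invariance of finite-dimensional distributions suffices, and this is the notion the paper itself uses in \eqref{eq:StationaryProperty}. For continuous paths, $\sup_{s\in[u-\tau_N,u]}|Y_s|=\sup_{q\in\mathds{Q}\cap[0,\tau_N]}|Y_{u-q}|$. This is an increasing limit of maxima over finitely many coordinates, and the laws of those maxima do not depend on $u$. So your argument needs no stronger stationarity assumption than the paper's.
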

\begin{proof}
As in the proof of Proposition \ref{eq:proplaw}, let us assume that $Y_t^{(1)}$ and $Y_t^{(2)}$ are two stationary solutions of equation \eqref{eq:Y}. As we did before, we define the process $Z=(Z_t)_{t\in \mathds{R}}$ as
\begin{equation*}
    Z_t = Y_t^{(1)} - Y_t^{(2)} \text{ for } t\in \mathds{R}.
\end{equation*} As we have shown in the proof of Proposition \ref{eq:proplaw}, for a fixed $u\in \mathds{R}$ the process $Z$ can be written as
\begin{equation}
 Z_t = Z_u R(t-u) +  \sum_{j=1}^N c_j \int_{-\tau_j}^0 R(t-\tau_j - s-u) Z_{s+u} ds \text{ almost surely. }    \label{eq:Zsol}
\end{equation}
By Proposition \ref{eq:proplaw}, we have that \(Y^{(1)}\) and \(Y^{(2)}\) have the same Gaussian distribution with finite variance, so
\begin{equation}
\sup_{s\in\mathds{R}} {E}\!\left[\left|Z_s\right|^2\right]
\leq
2\sup_{s\in\mathds{R}} {E}\!\left[\left|Y_s^{(1)}\right|^2\right]
+
2\sup_{s\in\mathds{R}} {E}\!\left[\left|Y_s^{(2)}\right|^2\right]
< \infty. \label{eq:inneUnique}
\end{equation}

 For a fixed $t>0$, using the result in  \eqref{eq:Zsol}, we arrive at
\begin{align}
    \left|E\left[Z_t^2\right]\right| 
    & \leq E\left[ |Z_u|^2\right]  |R(t-u)|^2 +  E\left[\left(\sum_{j=1}^N |c_j| \int_{-\tau_j}^0 |R(t-\tau_j - s-u)| |Z_{s+u}| ds\right)^2\right]  \nonumber\\
    & + 2  E\left[ |Z_u|^2\right]^{1/2}  |R(t-u)|   E\left[\left(\sum_{j=1}^N |c_j| \int_{-\tau_j}^0 |R(t-\tau_j - s-u)| |Z_{s+u}| ds\right)^2\right]^{1/2}  \nonumber \\
   & \leq \sup_{s\in\mathds{R}} E\left[ |Z_s|^2\right] \left(    |R(t-u)|^2 + \left(\sum_{j=1}^N |c_j| \int_{-\tau_j}^0 |R(t-\tau_j - s-u)| ds\right)^2 \right.  \nonumber\\
   & \left. + 2   |R(t-u)|   \left(\sum_{j=1}^N |c_j| \int_{-\tau_j}^0 |R(t-\tau_j - s-u)|  ds\right) \right) \label{eq:inneuni2}
\end{align} Taking the limit as $u\to -\infty$ on both sides of the inequality \eqref{eq:inneuni2},
we arrive at
$  \left|E\left[Z_t^2\right]\right| = 0,$ where we used both \eqref{eq:cond1} and \eqref{eq:inneUnique}. From Proposition \ref{eq:proplaw}, we then have that
$E[Z_t] = 0.$ We have just proved that
\[ Z_t = 0 \text{ almost surely for all } t\in \mathds{R}.\] This implies that for a fixed $t\in \mathds{R}$, there exists an event $A_t \in \mathcal{F}$ with probability one such that
\[ A_t = \{\omega \in \Omega:   Z_t(\omega) = 0  \}.\] We will prove that the event 
\[ A_{\mathds{Q}} = \mathcal{C} \cap \bigcap_{q\in \mathds{Q}} A_q\] has probability one, where
\[ \mathcal{C} = \{\omega \in \Omega: t\to Z_t(\omega) \text{ is continuous on } \mathds{R} \}. \]  For $k=1,2$, due to the fact that $Y^{(k)}$ is the stationary solution of \eqref{eq:Y}, it has almost surely continuous sample paths. So, the process $Z$ also has  continuous sample paths with probability one and hence $\mathds{P}(\mathcal{C})=1$. Since for each  $q\in \mathds{Q}$ the event $A_q$ has probability one, the event $A_{\mathds{Q}}$ has probability one because it is a countable intersection of events all with probability one.

Finally, fix any element $\omega\in A_{\mathds{Q}}$. Using the fact that $\mathds{Q}$ is dense in $\mathds{R}$, for each $t\in \mathds{R}$ there exists a sequence $(q_n)_{n=1}^\infty \subset \mathds{Q}$ such that $q_n \to t$ as $n\to \infty$. By continuity of $Z$ we have that
\[Z_t (\omega) = \lim_{n\to \infty} Z_{q_{n}}(\omega) =0,\] where the last equality comes from the fact that $\omega \in A_\mathds{Q}$. So we have proved that
\[ \{Z_{t}(\omega) = 0 \text{ for all } t\in \mathds{R} \} \text{ for every } \omega\in A_{\mathds{Q}}.\] This implies that
\begin{align*}
  \mathds{P}\left( Y_{t}^{(1)}= Y_{t}^{(2)} \text{ for all } t\in \mathds{R} \right)= \mathds{P}\left( Z_{t}= 0 \text{ for all } t\in \mathds{R} \right)=1,
\end{align*} which proves the desired result.
\end{proof}

\section{Conclusion}

In this paper, we studied a linear stochastic delay differential equation driven by fractional Brownian motion and involving a finite number of discrete delays. The model combines two complementary sources of memory: the dependence generated by the fractional Brownian noise and the delayed feedback generated by the drift. By using the fundamental solution of the associated deterministic delay equation, we obtained an explicit representation of the solution in Section~\ref{sect:model}. This explicit solution allowed us to derive formulas for the mean and auto-covariance function of the process as well as obtaining a closed formula for the limiting distribution in Section \ref{sect:limiting}. Finally in Section \ref{sect:Stationary}, by considering a two-sided fractional Brownian motion, we constructed a strictly stationary solution on the whole real line and proved uniqueness of the stationary solution. This gives a complete description of the stationary regime of the model and connects the time-domain covariance analysis with a spectral representation.

Several directions remain open for future research. A first natural extension is the development of statistical estimation methods for the parameters of the model. Estimation of the drift coefficients, volatility coefficient, delay parameters, and Hurst parameter would be essential for applications. This could be approached by adapting methods developed for fractional Ornstein--Uhlenbeck processes, such as those in \citet{Kleptsyna2002} and \citet{HU20101030}, as well as inference techniques for affine stochastic delay equations, such as \citet{Kulcher2013}. In the specific context of fractional stochastic delay equations, the estimation problem is also related to the work of \citet{Rao2008}.

A second direction is the numerical analysis of the model. Although the explicit solution is useful theoretically, simulation and estimation require efficient numerical methods for the deterministic fundamental solution and for the fractional stochastic convolution. This becomes especially important when the number of delays is large or when the delays are close to each other. Future work could investigate stable discretization schemes, fast convolution methods, and simulation algorithms inspired by the hybrid schemes used for Brownian semistationary and rough-volatility-type processes; see, for example, \citet{Bennedsen2017}. Such numerical methods would also make it possible to study the finite-sample behaviour of estimators and to compare the model with empirical data.

A third possible extension concerns more general delay structures. The present paper focuses on finitely many discrete delays. In applications, however, delayed feedback may be distributed over a time interval rather than concentrated at finitely many lags. Extending the analysis to distributed delays, state-dependent delays, or random delays would provide a more flexible modelling framework. The stability theory for functional differential equations developed by \citet{hale2006functional} may provide useful tools for this purpose. Related developments for stochastic delay equations driven by fractional Brownian motion can be found in \citet{Ferrante2006} and, more recently, in \citet{ghani2025singular}.

Finally, the model can be further developed for applications in finance and energy. In finance, the process studied in this paper could be used as a Gaussian factor in stochastic volatility models with delayed feedback, combining ideas from rough volatility \citep{gatheral2022volatility} with stochastic delay volatility models such as \citet{GuineaJulia2024}. In energy markets, delay and memory effects naturally arise in the modelling of production, demand, and weather-driven quantities, connecting the present framework with continuous-time stationary models for wind power such as \citet{rohde2019continuous} and with fractionally filtered delay models such as \citet{Davis2020}. These applications suggest that fractional delay models may provide a useful bridge between tractable Gaussian modelling, long-memory behaviour, and delayed feedback mechanisms observed in real data.

\bibliography{ref}
\appendix

\section{\protect$M$-operator and Laplace transform of $R$}\label{sect:laplace}

In this appendix, we recall the construction of the \(M\)-operator used in the white-noise approach to fractional Brownian motion. Fractional white-noise theory was developed by \citet{ElliottVanDerHoek2003} and \citet{BiaginiOksendalSulemWallner2004}. We summarize the definitions and properties following \citet[Chapter 4]{biagini2008stochastic}.

Let $\mathcal{S}(\mathds{R})$ denote the Schwartz space of rapidly decreasing
smooth functions on $\mathds{R}$, and let $\mathcal{S}'(\mathds{R})$ be its
dual. We can define a probability space $(\Omega, \mathcal{F},\mathds{P})$, where $\Omega:=\mathcal{S}'(\mathds{R})$, $\mathcal{F}:=\mathcal{B}(\mathcal{S}'(\mathds{R}))$  and the probability measure $\mathds{P}$ is defined as the measure satisfying
\begin{equation*}
\int_{\mathcal{S}'(\mathds{R})} \exp(i\langle\omega,f\rangle)\,d\mathds{P}(\omega)
=
\exp\left(-\frac{1}{2}\|f\|_{L^2(\mathds{R})}^2\right)
\qquad
\text{for all }f\in\mathcal{S}(\mathds{R}),
\end{equation*} where  $\langle\omega,f\rangle=\omega(f)$ is the action of
$\omega\in\Omega=\mathcal{S}'(\mathds{R})$ on $f\in\mathcal{S}(\mathds{R})$. The measure $\mathds{P}$ is called the white noise probability measure. Since \(S(\mathds R)\) is dense in \(L^2(\mathds R)\), the action 
\(\langle \omega,f\rangle\) can be extended  to every \(f\in L^2(\mathds R)\).
In particular, we can define the standard Brownian motion $\widetilde{W}=(\widetilde{W}_t)_{t\in \mathds{R}}$ in a distributional sense as
\[ \widetilde{W}_t := \left\langle \omega,\mathds{I}_{[0,t]}(.) \right\rangle \text{ for } t\in \mathds{R},\] where
\[
\mathds{I}_{[0,t]}(s)
=
\begin{cases}
1, & 0\le s\le t,\\
-1, & t\le s\le 0,\quad \text{except }t=s=0,\\
0, & \text{otherwise}.
\end{cases}
\]

By the Kolmogorov continuity criterion, we can find a modification of $\widetilde{W}$ with continuous sample paths, which we denote $W = (W_t)_{t\in \mathds{R}}$. In fact, the Wiener integral with respect to the standard Brownian motion can be represented as
\begin{equation}\label{eq:WhiteNoiseInt}
    \int_{\mathds{R}} f(t) dW_t = \langle \omega ,f \rangle \text{ for } f\in L^2(\mathds{R}).
\end{equation}

It is possible to define fractional Brownian motion using the white noise approach, but to do so, we need the $M$-operator.
\begin{definition}\label{def:MOperator} \citep[p.~304]{ElliottVanDerHoek2003}
    Let $0<H<1$. The $M$-operator  is defined on a function $f\in \mathcal{S}(\mathds{R})$ by
    \[\widehat{Mf}(y) = \sqrt{\kappa_H} |y|^{1/2-H} \widehat{f}(y), \text{ for } y\in \mathds{R},\]
    where
    \[\widehat{g}(y) = \int_{\mathds{R}} e^{- ix y} g(x) dx\text{, and }  \kappa_H = \sin(\pi H) \Gamma(2H+1).\]
\end{definition} The definition of the $M$-operator can be extended to the  space
\[
L_H^2(\mathds{R})
:=\left\{
f:\mathds{R}\to\mathds{R}\ : \ |y|^{1/2-H}\hat{f}(y)\in L^2(\mathds{R})
\right\}.
\] 
As in the definition of the standard Brownian motion, we can define the fractional Brownian motion $ \widetilde{W}^H = \left(\widetilde{W}^H_t\right)_{t\in \mathds{R}} $ using the $M$-operator,
\[ \widetilde{W}^H_t  :=  \langle \omega , M\mathds{I}_{[0,t]}(.) \rangle \text{ for } t\in \mathds{R}.\] 
The result in \eqref{eq:WhiteNoiseInt} allows us to express fractional Brownian motion in terms of standard Brownian motion and the $M$-operator as
\begin{equation}\label{eq:WhiteNoiseFract}
\widetilde{W}_t^H = \langle \omega , M\mathds{I}_{[0,t]}(.) \rangle = \int_{\mathds{R}} M \mathds{I}_{[0,t]}(s) dW_s(\omega).  
\end{equation}

By the Kolmogorov continuity criterion, we can find a continuous modification of $\widetilde{W}^H$, which we denote by $W^H = \left(W^H_t\right)_{t\in \mathds{R}}$.  In fact, it can be shown that the Wiener integral with respect to the fractional Brownian motion $W^H$ can be expressed in terms of the $M$-operator and the standard Brownian motion $W$,
\begin{equation}\label{eq:FracIntMop}
    \int_{\mathds{R}} f(t) dW_t^H(\omega) =  \langle \omega , M f(.) \rangle = \int_{\mathds{R}} M f(t) dW_t(\omega),
\end{equation} for every $f\in L^2_H(\mathds{R}).$

\begin{remark}
The Fourier multiplier in Definition \ref{def:MOperator} contains the normalization factor $\sqrt{\kappa_H}$, where
\[
\kappa_H = \sin(\pi H)\Gamma(2H+1).
\]\citet{ElliottVanDerHoek2003} use the unnormalized multiplier
\[
|\lambda|^{1/2-H}.
\] With that convention, the definition of the fractional Brownian motion given in \eqref{eq:WhiteNoiseFract} satisfies
\begin{align*}
E\!\left[\widetilde{W}_t^H \widetilde{W}_s^H\right] & =   \int_{\mathds{R}} M\mathds{I}_{[0,t]}(x)\,M\mathds{I}_{[0,s]}(x) \,dx\\
& = \frac{1}{2\sin(\pi H)\Gamma(2H+1)}\left(|t|^{2H} + |s|^{2H} - |t-s|^{2H} \right)
\end{align*} \citep[p.~304]{ElliottVanDerHoek2003}. 
Therefore, to obtain the standard fractional Brownian motion used throughout this paper, we need to include the factor $\sqrt{\kappa_H}$ in the definition of $M_H$.
\end{remark}

The $M$-operator is used in Proposition \ref{prop:stat} to calculate the auto-covariance of the process $Y^{\stat}$ in \eqref{eq:StatSol}. To that end, we need to compute the Fourier transform of the function $R$ in \eqref{eq:R}, and to prove that $R\in L_H^2(\mathds{R})$. 

First, we compute the Laplace transform  of the function $R$. For suitable $f:[0,\infty)\rightarrow\mathds{C}$  and $s\in\mathds{C}$, the Laplace transform is defined as
 \[
  L_f(s)  = \int_0^\infty f(u) e^{-s u} du.
 \]  By properties of the Laplace transform \citep[see, for example][Theorems 2.1, 2.4, Appendix B]{Dyke2014}, equation \eqref{eq:Rdeter} can be written as
\begin{align}
       sL_R(s) - R(0)
       & =  bL_R(s) +  \sum_{j=1}^N c_je^{-\tau_j s} L_R(s).  \label{eq:deter3}
\end{align} Rearranging equation \eqref{eq:deter3}, we obtain 
\begin{align}
    L_R(s) & = \frac{1}{s-b-\sum_{j=1}^N c_j e^{-\tau_j s}}.
   \label{eq:Laplace_end}
\end{align} Hence, the Fourier transform of $R$ can be computed from \eqref{eq:Laplace_end} as:
\[ \widehat{R}(u) =  L_R(i u) \text{ for } u\in \mathds{R}.\]

Finally, we conclude this appendix with the following result.

\begin{proposition}\label{prop:SpaceL2h}
    If condition \eqref{eq:cond1} is satisfied, then $R\in L^2_H(\mathds{R})$. 
\end{proposition}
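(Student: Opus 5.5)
We have to show that $\int_{\mathds{R}} |y|^{1-2H}|\widehat{R}(y)|^2\,dy<\infty$, where $R$ is extended by zero on $(-\infty,0)$. The plan is to split the integral into a neighbourhood of the origin, $|y|\le 1$, and the tails, $|y|\ge 1$, and to bound $|\widehat{R}|$ separately on each region. Both bounds come from the exponential estimates \eqref{eq:Rbound} and \eqref{eq:Rprimebound}, which hold under condition \eqref{eq:cond1}.

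\emph{Near the origin.} By \eqref{eq:Rbound} we have $R\in L^1(\mathds{R})$, so
\[
|\widehat{R}(y)|\le \int_0^\infty |R(u)|\,du\le \frac{M}{\lambda}
\qquad\text{for all } y.
\]
Since $1-2H>-1$ for every $H\in(0,1)$, it follows that
\[
\int_{-1}^1 |y|^{1-2H}|\widehat{R}(y)|^2\,dy\le \frac{M^2}{\lambda^2}\int_{-1}^1|y|^{1-2H}\,dy<\infty .
\]
This step has no subtlety: the weight $|y|^{1-2H}$ is locally integrable.

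\emph{Tails.} Here we need decay of $\widehat{R}$. We integrate by parts, $\widehat{R}(y)=\int_0^\infty e^{-iyu}R(u)\,du$. $R$ is continuous on $[0,\infty)$ (it solves the delay equation, so it is absolutely continuous there), and it has a single jump at $u=0$, of size $R(0)=1$. Using \eqref{eq:Rprimebound} and $R(u)\to0$, this gives
\[
\widehat{R}(y)=\frac{1}{iy}\Big(1+\int_0^\infty e^{-iyu}R'(u)\,du\Big),
\]
and therefore
\[
|\widehat{R}(y)|\le \frac{1+C_R/\lambda}{|y|}.
\]
Hence
\[
\int_{|y|\ge1}|y|^{1-2H}|\widehat{R}(y)|^2\,dy\le C\int_{|y|\ge1}|y|^{-1-2H}\,dy<\infty,
\]
because $1+2H>1$.

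As a cross-check, one can instead use the closed form $\widehat{R}(y)=L_R(iy)$ from \eqref{eq:Laplace_end}. This gives
\[
|iy-b-\textstyle\sum_j c_je^{-iy\tau_j}|\ge |y|-|b|-\sum_j|c_j|,
\]
which yields the same $O(1/|y|)$ decay for large $|y|$. For this route to be valid, the Laplace transform has to be evaluated on the imaginary axis. That is justified because $R$ is integrable, so $L_R$ is analytic for $\operatorname{Re}s>-\lambda$ and continuous up to $\operatorname{Re}s=0$, and the identity \eqref{eq:deter3} extends to this region.

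The main obstacle is to make the integration by parts rigorous given only the almost-everywhere differentiability of $R$. Specifically, we must check that $R$ is absolutely continuous on $[0,\infty)$ and that the boundary term at infinity vanishes. Absolute continuity follows from the piecewise-$C^1$ structure in \eqref{eq:R}: $R$ is smooth between consecutive points of the form $\langle\alpha,\tau\rangle$, and it is continuous at those points for $t>0$ because each new term enters with a factor $(t-\langle\alpha,\tau\rangle)^n$, $n\ge1$. The boundary term at infinity vanishes by \eqref{eq:Rbound}. Combining the two regions gives $R\in L^2_H(\mathds{R})$.
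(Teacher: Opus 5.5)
Your proof is correct and has the same overall structure as the paper's. Both split the frequency integral at a threshold, and near the origin both use $|\widehat{R}|\le M/\lambda$ together with local integrability of $|y|^{1-2H}$.

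The difference is in how you obtain the $O(1/|y|)$ tail decay. The paper's main argument is what you list as a cross-check. It takes the closed form $\widehat{R}(u)=L_R(iu)=\bigl(iu-b-\sum_j c_je^{-iu\tau_j}\bigr)^{-1}$ from \eqref{eq:Laplace_end} and applies the reverse triangle inequality, which gives $|\widehat{R}(u)|<2/|u|$ for $|u|>A>2(|b|+\sum_j|c_j|)$. Both routes rest on the identity $iy\,\widehat{R}(y)=1+\widehat{R'}(y)$. The paper then substitutes the delay equation for $R'$ and solves for $\widehat{R}$ exactly. Your main argument stops one step earlier and simply bounds $|\widehat{R'}(y)|\le\|R'\|_{L^1}\le C_R/\lambda$ using \eqref{eq:Rprimebound}.

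What each route buys:
\begin{itemize}
\item Your version needs only three facts: $R(0)=1$, absolute continuity of $R$ on $[0,\infty)$ (which you justify correctly from the $(t-\langle\alpha,\tau\rangle)^n$ factors), and integrability of $R'$. It therefore avoids identifying $\widehat{R}$ with the Laplace transform on the imaginary axis, a step the paper takes without separate justification. It would also carry over to other kernels, such as distributed delays.
\item The paper's version gives a tail constant that depends only on $b$ and the $c_j$, not on $M$ and $\lambda$. It also reuses the formula for $\widehat{R}$ that the paper needs anyway for the spectral density in Proposition~\ref{prop:stat}.
\end{itemize}

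A minor point on your cross-check: since $-\lambda<0$, the imaginary axis lies inside the half-plane $\operatorname{Re}s>-\lambda$ where the Laplace integrals of $R$ and $R'$ converge absolutely. No continuity up to a boundary is needed, and \eqref{eq:deter3} can be derived directly at $s=iy$.
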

\begin{proof}
Applying the result in  \eqref{eq:Rbound},  we bound the Fourier transform of $R$ as 
 \begin{align}
     \left| \widehat{R}(u) \right| & \leq \int_{\mathds{R}}  \left|e^{iu x} \right| \left| R(x)\right| dx \nonumber\\
     & = \int_{\mathds{R}}  \left|e^{iu x} \right| \left| R(x) \mathds{1}_{[0,\infty)}(x)\right| dx \nonumber \\
     &\leq  M \int_0^\infty  e^{-\lambda x} dx  = \frac{M}{\lambda},\label{eq:hatRbound}
 \end{align} where $M,\lambda>0$.
To obtain the desired result, we need to prove that
\[ \int_{\mathds{R}} |u|^{1-2H}  \left| \widehat{R}(u) \right|^2 du <\infty.\]
To prove this result, we split the previous integral into two parts,
\begin{equation} \label{eq:IntSplit}
    \int_{\mathds{R}} |u|^{1-2H}  \left| \widehat{R}(u) \right|^2 du  =  \int_{[-A,A]} |u|^{1-2H}  \left| \widehat{R}(u) \right|^2 du +  \int_{[-A,A]^c} |u|^{1-2H}  \left| \widehat{R}(u) \right|^2 du,
\end{equation} for some $A> 2\left(|b| + \sum_{j=1}^N |c_j|\right)$. The bound in \eqref{eq:hatRbound} allows us to bound the first integral in \eqref{eq:IntSplit},
\begin{equation}
     \int_{[-A,A]} |u|^{1-2H}  \left| \widehat{R}(u) \right|^2 du \leq \frac{M^2}{\lambda^2} \int_{[-A,A]} |u|^{1-2H} du <\infty,\label{eq:firstIntBound}
\end{equation} where the last inequality follows from the fact that $1-2H>-1$, since $H\in (0,1)$. We now bound the second integral in \eqref{eq:IntSplit}. By application of the reverse triangle inequality, we have

\begin{align*}
    \left| \frac{1}{\widehat{R}(u)}  \right| & = \left| iu -b - \sum_{j=1}^N c_j e^{-iu \tau_j }\right|
    \geq |u|  -  |b| - \sum_{j=1}^N |c_j|.
\end{align*} If we assume that $u\in [-A,A]^c$, we obtain that
\begin{equation}
      \left| \frac{1}{\widehat{R}(u)}  \right|   \geq  |u|-\frac{A}{2} > |u| - \frac{|u|}{2} = \frac{|u|}{2}.\label{eq:invhatRbound}
\end{equation}

Hence, using the bound in \eqref{eq:invhatRbound}, we can show that the second integral in \eqref{eq:IntSplit} satisfies
\begin{equation}
     \int_{[-A,A]^c} |u|^{1-2H}  \left| \widehat{R}(u) \right|^2 du < 4 \int_{[-A,A]^c} |u|^{-1-2H} du <\infty,\label{eq:secondIntBound}
\end{equation} where the last inequality follows because $H\in (0,1)$. The result follows from \eqref{eq:firstIntBound} and \eqref{eq:secondIntBound}.
\end{proof}

\end{document}